\renewcommand{\epsilon}{\varepsilon}
\newtheorem{theorem}{Theorem}[section]
\newtheorem{proposition}[theorem]{Proposition}
\newtheorem{corollary}[theorem]{Corollary}
\newtheorem{lemma}[theorem]{Lemma}
\newtheorem{question}[theorem]{Question}
\newtheorem*{Main Theorem}{Main Theorem}
\theoremstyle{definition}
\newtheorem{example}[theorem]{Example}
\newtheorem{definition}[theorem]{Definition}
\theoremstyle{remark}
\newtheorem{remark}[theorem]{Remark}
\DeclareMathOperator{\cd}{cd}
\DeclareMathOperator{\gd}{gd}
\newcommand{\cll}{{\scriptstyle\bf L}}
\newcommand{\clh}{{\scriptstyle\bf H}}
\newcommand{\lhf}{\cll\clh\mathfrak F}
\newcommand{\HFF}{\clh^{\mathfrak F}{\mathfrak F}}
\newcommand{\LHFF}{\cll\clh^{\mathfrak F}{\mathfrak F}}
\newcommand{\hff}[1]{\clh^{\mathfrak F}_{#1}{\mathfrak F}}
\newcommand{\mF}{\mathfrak {F}}
\newcommand{\mX}{\mathfrak {X}}
\newcommand{\mH}{\mathfrak {H}}
\newcommand{\mT}{\mathfrak {T}}
\newcommand{\hix}[1]{\mbox{${\scriptstyle\bf H}^{\mathfrak F}_{#1}\mathfrak X$}}
\newcommand{\lhff}{{\scriptstyle\mathrm{L}}\mbox{${\scriptstyle\bf H}^{\mathfrak F}\mathfrak F$}}
\newcommand{\FF}{\mathfrak F}
\newcommand{\Q}{\mathbb Q}
\newcommand{\Z}{\mathbb Z}
\newcommand{\R}{\mathbb R}
\newcommand{\fpinfty}{{\FP}_{\infty}}
\newcommand{\FP}{\operatorname{FP}}
\newcommand{\UUFP}{\underline{\underline{\operatorname{FP}}}}
\newcommand{\UFP}{\underline{\operatorname{FP}}}
\newcommand{\EXT}{\operatorname{Ext}}
\newcommand{\Ho}{\operatorname{H}}
\newcommand{\cohom}[3]{H^{{\raise1pt\hbox{$\scriptstyle#1$}}}(#2\>\!,#3)}
\newcommand{\tatecohom}[3]%
  {\widehat H^{{\raise1pt\hbox{$\scriptstyle#1$}}}(#2\>\!,#3)}
\newcommand{\Cohom}[3]%
  {H^{{\raise1pt\hbox{$\scriptstyle#1$}}}\big(#2\>\!,#3\big)}
\newcommand{\Tatecohom}[3]%
  {\widehat H^{{\raise1pt\hbox{$\scriptstyle#1$}}}\big(#2\>\!,#3\big)}
\newcommand{\homol}[3]{H_{{\lower1pt\hbox{$\scriptstyle#1$}}}(#2\>\!,#3)}
\newcommand{\homolog}[2]{H_{{\lower1pt\hbox{$\scriptstyle#1$}}}(#2)}
\newcommand{\IND}{\operatorname{Ind}}
\newcommand{\RES}{\operatorname{Res}}
\newcommand{\colim}{\varinjlim}
\newcommand{\mono}{\rightarrowtail}
\newcommand{\epi}{\twoheadrightarrow}
\newcommand{\eg}{{\underline EG}}
\newcommand{\uueg}{{\underline{\underline E}}G}
\newcommand{\EFG}{E_{\frak F}G}
\newcommand{\Hom}{\operatorname{Hom}}
\newcommand{\Ab}{\mathfrak{Ab}}
\newcommand{\OFG}{\mathcal O_{\mathcal F}G}
\newcommand{\of}[1]{\mathcal O_{\mathfrak F}{#1}}
\newcommand{\frakF}{\mathfrak{F}}
\newcommand{\frakG}{\mathfrak{G}}
\newcommand{\calO}{\mathcal O}
\newcommand{\ModOFG}{\mathop{{\operator@font
Mod\text{-}}\calO_{\frakF}G}}
\newcommand{\OFGMod}{\mathop{\calO_{\frakF}G\text{-}{\operator@font
Mod}}}
\newcommand{\ModOGG}{\mathop{{\operator@font
Mod\text{-}}\calO_{\frakG}G}}
\newcommand{\OGGMod}{\mathop{\calO_{\frakG}G\text{-}{\operator@font
Mod}}}
\newcommand{\OfG}{\OC{G_{1}}{\frakF_{1}}}
\newcommand{\OgG}{\OC{G_{2}}{\frakF_{2}}}
\newcommand{\Fall}{\frakF_{\operator@font all}}
\newcommand{\Ffin}{\frakF_{\operator@font fin}}
\newcommand{\Fvc}{\frakF_{\operator@font vc}}
\newcommand{\Fic}{\frakF_{\operator@font ic}}
\newcommand{\Ffg}{\frakF_{\operator@font fg}}
\newcommand{\Fpc}{\frakF_{\operator@font pc}}
\newcommand{\Fab}{\frakF_{\operator@font ab}}
\newcommand{\Fvpc}{\frakF_{\operator@font vpc}}
\newcommand{\Fvab}{\frakF_{\operator@font vab}}
\newcommand{\OC}[2]{\mathop{\mathcal{O}_{#2}#1}\nolimits}
\renewcommand{\OFG}{\OC{G}{\frakF}}
\DeclareMathOperator{\PD}{pd}
\DeclareMathOperator{\id}{id}
\renewcommand{\coprod}%
{\mathop{\rotatebox[origin=c]{180}{$\displaystyle\prod$}}\limits}
\renewcommand{\:}{\mathord{:}\hspace*{0.8ex plus .25ex minus .1ex}}
\title[Complete Bredon cohomology]{Complete Bredon cohomology and its applications to hierarchically defined groups}
\author{Brita  E.~A.~Nucinkis}
\address{ Department of Mathematics,
Royal Holloway, University of London, 
Egham, TW20 0EX }
\email{Brita.Nucinkis@rhul.ac.uk}
\author{Nansen Petrosyan}
\address{School of Mathematics, University of Southampton, Southampton SO17 1BJ}
\email{N.Petrosyan@soton.ac.uk}
\date{\today} 
\keywords{}
\subjclass[2000]{
20J05}
\begin{document}

\begin{abstract}  By considering the  Bredon analogue of  complete cohomology of a group, we show that every  group in the class $\LHFF$ of type Bredon-$\fpinfty$ admits a finite dimensional model for $\EFG$. 

We also show that abelian-by-infinite cyclic groups admit a $3$-dimensional model for the classifying space for the family of virtually nilpotent subgroups.  This allows us to prove that for  $\mF,$ the class of virtually cyclic groups, the class of $\LHFF$-groups contains all locally virtually soluble groups and  all   linear groups over $\mathbb C$  of integral characteristic.
\end{abstract}

\maketitle

\thispagestyle{empty}


\section{Introduction}

Classifying spaces with isotropy in a family have been the subject of intensive research, with a large proportion focussing on
 $\eg$, the classifying space with finite isotropy~\cites{lueckbook, lueck, luecksurvey}. Classes of groups admitting a finite dimensional model for $\eg$ abound, such as elementary amenable groups  of finite Hirsch length ~\cites{fn, kmn},  hyperbolic groups \cite{MS}, mapping class groups \cite{luecksurvey} and $Out(F_n)$ \cite{V}. Finding  manageable models for $\uueg,$ the classifying space for virtually cyclic isotropy, has been
shown to be much more elusive.   So far manageable models have
 been found for crystallographic groups \cite{lafontortiz},
polycyclic-by-finite groups~\cite{lueckweiermann}, hyperbolic groups \cite{jpl}, certain HNN-extensions  \cite{fluch-11}, elementary amenable groups of finite Hirsch length \cite{fln, DP13, DegPet2} and groups acting isometrically with discrete orbits on separable complete  CAT(0)-spaces \cite{luck3, DegPet4}.

Let $\frak F$ be a family of subgroups of a given group and denote by $\EFG$ the classifying space with isotropy in $\frak F.$ In this note we propose a method to decide whether a group has a finite dimensional model for $\EFG$ without actually providing a bound. This is closely related to Kropholler's Theorem that a torsion-free group in $\lhf$ of type $\fpinfty$ has finite integral cohomological dimension \cite{kropholler}. To do this we consider groups belonging to the class $\LHFF$, a class recently considered in \cite{DPT}:

 Let $\mF$ be a  class of groups closed under taking subgroups. Let $G$ be a group and set $\mF\cap G=\{H\leq G \; | \;  \mbox{$H$ is isomorphic to a subgroup in $\mF$}\}.$   Let $\mX$ be a class of groups. Then \hix{} is defined as the smallest class of groups containing the class
$\mX$ with the property that if a group $G$ acts cellularly on a
finite dimensional CW-complex $X$ with all isotropy
subgroups in \hix{}, and such that for each subgroup $F\in \mF\cap G$ the fixed point set
$X^F$ is contractible, then $G$ is in \hix{}. The class ${{\scriptstyle\mathrm{L}\bf H}}^\mathfrak{F}\mX$ is defined to be the class of groups that are locally \hix{}-groups.\\
In this definition and throughout  the paper, we always assume that a cellular action of group on a CW-complex is  admissible. That is, if  an element of group stabilises a cell, then it fixes it pointwise. \\
We generalise complete cohomology of a group to the Bredon setting and verify that some of the main results hold in this new context. This allows us to establish:

\bigskip\noindent {\bf Theorem A.} {\sl Let $G$ be group in $\LHFF$ of type Bredon-$\fpinfty$. Then $G$ admits a finite dimensional model for $\EFG.$}\\

We  consider the class ${{\scriptstyle\mathrm{L}\bf H}}^\mathfrak{F}\mX$, especially when $\mF=\mX$ is either the class of all finite groups or the class of all virtually cyclic groups.
Note, that if $\mF$ contains the trivial group only, then ${\scriptstyle\mathrm{L}}$\hix{} is exactly Kropholler's class  ${\scriptstyle\mathrm{L}}{{\scriptstyle\bf H}}\mathfrak{X}$.
If $\mF$ is the class of all finite groups, then ${\scriptstyle\mathrm{L}}{{\scriptstyle\bf H}}^{\mathfrak{F}}\mathfrak{F}$ also turns out to be quite large. It contains all  elementary amenable groups and all  linear groups over a field of arbitrary  characteristic (see \cite{DegPet4}, \cite{DPT}). It is also closed under extensions, taking subgroups, amalgamated products, HNN-extensions, and countable directed unions. Here we show that similar closure operations hold when $\mF$ is the class of virtually cyclic groups.

In \cite{DPT}, it was shown that when $\mF$ is the class of finite groups, then $\lhff$ contains all elementary amenable groups. We show that when  $\mF=\mF_{vc}$, the class of virtually cyclic groups, then ${{\scriptstyle\mathrm{L}\bf H}}^{\Fvc}\Fvc$ contains all locally virtually soluble groups.
We also show that any countable subgroup of a general linear group $\mathrm{GL}_n(\mathbb C)$ of integral characteristic lies in ${{\scriptstyle\bf H}}^{\Fvc}\Fvc$. Both of these results rely on the following:

\bigskip\noindent {\bf Theorem B.}  {\sl Let $G$ be a semi-direct product $A\rtimes \Z$  where $A$ is a countable abelian group. Define $\mH$ to be the family of all virtually nilpotent subgroups of $G$.
Then there exists a $3$-dimensional model for $E_{\mH}G$.}\\

Another consequence of Theorem B is that any semi-direct product $A\rtimes \Z$  where $A$ is a countable abelian group lies in ${{\scriptstyle\bf H}}_3^{\Fvc}\Fvc$.


\section{Background on Bredon cohomology}

In this note, a  family $\frakF$ of subgroups of a group $G$
 is closed under conjugation and taking subgroups. The families most frequently considered are the family 
$\Ffin(G)$ of all finite subgroups of $G$ and the family $\Fvc(G)$ of 
all virtually cyclic subgroups of $G$. 

For a subgroup $K \leq G$ we consider:
\begin{equation*}
    \frakF\cap K = \{ H\cap K \mid H\in \frakF\}.
\end{equation*}

Bredon cohomology has been introduced for finite groups by
Bredon~\cite{bredon-67} and later  generalised to arbitrary
groups by Lück~\cite{lueckbook}. 

The orbit category $\OFG$ is defined as follows:  objects are 
the transitive $G$-sets~$G/H$ with $H\leq G$ and $H \in \FF$; morphisms of $\OFG$ are all $G$-maps $G/H \to G/K$, where $H,K \in \FF.$ 

An $\OFG$-module, or Bredon module, is a contravariant functor $M\: \OFG\to \Ab$ from the orbit category to the category of
abelian groups. A natural transformation $f\: M\to N$ between two
$\OFG$-modules  is called a morphism of
$\OFG$-modules. 

The trivial 
$\OFG$-module is denoted by $\Z_{\frakF}.$  It is given by $\Z_\FF(G/H) = \Z$ and \\
$\Z_\FF(\varphi)=\id$ for all objects and morphisms $\varphi$ of $\OFG$.

The category of  $\OFG$-modules, denoted $\ModOFG,$   is a functor  category and therefore  inherits   properties from the 
category $\Ab$.  For example,  a sequence  $L \to M\to N$ of Bredon 
modules is exact if and only if, when evaluated at every $G/H\in \OFG,$ we 
obtain an exact sequence $L(G/H) \to M(G/H)\to N(G/H)$ of abelian groups.

Since $\Ab$ has enough projectives, so does $\ModOFG,$ and we can define homology functors 
 in $\ModOFG$ analogously to ordinary cohomology, using projective resolutions.

\medskip
There now follow the basic properties of free and projective $\OFG$-modules as described in \cite[9.16, 9.17]{lueckbook}. An $\FF$-set $\Delta$ is a collection of sets $\{\Delta_K \,|\, K \in \FF\}.$ For any two $\FF$-sets $\Delta$ and $\Omega$, an $\FF$-map is a family of maps $\{\Delta_K \to \Omega_K\,|\, K \in \FF\}$. Hence we have a forgetful functor from the category of $\OFG$-modules to the category of $\FF$-sets. One defines the free functor as the left adjoint to this forgetful functor. This  satisfies the usual universal property. 

There is a more constructive description of free Bredon-modules as follows:
Consider the right Bredon-module: $\Z[-,G/K]_\FF$
with $K\in\frakF$.  When  evaluated at $G/H$
we obtain the free abelian group $\Z[G/H,G/K]_\FF$ on the set
$[G/H,G/K]_\FF$ of $G$-maps $G/H\to G/K$. These modules are free, cf.~\cite{lueckbook}*{p.~167}, and can be viewed as the building blocks of the free right Bredon-modules.
Generally, a free module is one of the form $\Z[-,\Delta]_\FF$, where $\Delta$ is a $G$-set with isotropy in $\FF.$ Projectives are now defined to be direct summands of frees.

\medskip

Given a covariant functor $F\: \OfG \to \OgG$ between orbit categories, one can now define induction and restriction functors along $F$, see
~\cite{lueckbook}*{p.~166}:

$$\begin{array}{lccc}
\IND_{F}: & \OfG & \to &\OgG \\
& M(-) & \mapsto & M(-)\otimes_{\frakF_{1}} [--,F(-)]_{\FF_2}
\end{array}$$
and
$$\begin{array}{lccc}
\RES_{F}: & \OgG & \to &\OfG \\
& M(--) & \mapsto & M\circ F(--)
\end{array}$$

Since these functors are adjoint to each others,   $\IND_{F}$ commutes with arbitrary
colimits~\cite{mac-lane-98}*{pp.~118f.} and  preserves free and
projective Bredon modules~\cite{lueckbook}*{p.~169}.
The case of particular interest is when $F$ is given by inclusion of a subgroup of $G$.

For subgroup $K$ of $G$  we consider the following functor
$$\begin{array}{lccc}
    \iota^G_{K}\: & \OC{K}{\frakF\cap K} &\to& \OFG \\
    & K/H & \mapsto & G/H.
\end{array}$$
and denote the corresponding induction and restriction functors by $\IND_K^G$ and  $\RES_K^G$ respectively.

\begin{lemma}
    \cite{symonds-05}*{Lemma 2.9}
    \label{lem:symonds}
    Let $K$ be a subgroup of $G$. Then $\IND^G_{K}$ is an 
    exact functor.
\end{lemma}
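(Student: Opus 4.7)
I would establish exactness of $\IND^G_K$ by checking it pointwise: since a sequence of $\OFG$-modules is exact if and only if it is exact upon evaluation at every $G/L\in\OFG$, it suffices to prove that for each $L\in\frakF$ the functor
\[
M\;\longmapsto\; \IND^G_K(M)(G/L)\;=\; M\otimes_{\OC{K}{\frakF\cap K}} P_L,
\qquad\text{where } P_L(K/H):=[G/L,G/H]_\frakF,
\]
is exact in $M$. Viewing $P_L$ as a covariant functor $\OC{K}{\frakF\cap K}\to\Ab$, the problem reduces to showing that $P_L$ is flat as a left $\OC{K}{\frakF\cap K}$-module; I would in fact prove the stronger statement that $P_L$ is free, which is enough because tensoring with a representable $\Z[K/H',-]_{\frakF\cap K}$ is just evaluation at $K/H'$, hence exact.

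To exhibit the free basis, use the standard identification $[G/L,G/H]_\frakF=(G/H)^L=\{gH:g^{-1}Lg\le H\}$. Put $T=\{g\in G:g^{-1}Lg\le K\}$; this set is stable under right multiplication by $K$, so pick representatives $\{g_i\}_{i\in I}$ of the right cosets $T/K$ and set $L_i=g_i^{-1}Lg_i$. Because $\frakF$ is closed under conjugation and $L_i\le K$, each $L_i$ belongs to $\frakF\cap K$. Take as candidate basis elements $b_i=g_iL_i\in P_L(K/L_i)$; the map
\[
\bigoplus_{i\in I}\Z[K/L_i,-]_{\frakF\cap K}\;\longrightarrow\;P_L,
\]
sending the Yoneda generator of the $i$-th summand to $b_i$, is the proposed isomorphism.

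The one substantive step, and the main obstacle, is to verify that this map is a natural isomorphism at every $K/H\in\OC{K}{\frakF\cap K}$. For surjectivity, given $gH\in(G/H)^L$, the condition $g^{-1}Lg\le H\le K$ forces $g\in T$, so $g=g_ik$ for a unique pair $(i,k)\in I\times K$, and then $k^{-1}L_ik=g^{-1}Lg\le H$ shows that $kH$ represents a morphism $\phi\:K/L_i\to K/H$ in $\OC{K}{\frakF\cap K}$ carrying $b_i$ to $gH$. For injectivity, the inclusion $H\le K$ implies that the right $K$-coset of $g$ is determined by $gH$, pinning down the index $i$ uniquely and leaving only the standard coset ambiguity $k\mapsto kh$ $(h\in H)$ that the assignment $k\mapsto kH\in(K/H)^{L_i}$ already absorbs; naturality in $H$ is routine. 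Once this decomposition is in hand, freeness of $P_L$ yields pointwise exactness of $\IND^G_K$, completing the plan.
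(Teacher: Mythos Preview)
Your argument is correct. The paper itself does not supply a proof of this lemma; it simply records the statement with a reference to \cite{symonds-05}*{Lemma~2.9}. Your approach---evaluating $\IND^G_K(M)$ at $G/L$, identifying the resulting left $\OC{K}{\frakF\cap K}$-module $P_L=\Z[G/L,G/(-)]_\frakF$ as a direct sum of corepresentables $\Z[K/L_i,-]_{\frakF\cap K}$ indexed by the double cosets $g_i\in T/K$, and concluding exactness via the co-Yoneda isomorphism $M\otimes\Z[K/L_i,-]\cong M(K/L_i)$---is precisely the standard argument and is essentially what Symonds does. The decomposition you describe is sometimes stated as a ``Mackey-type'' formula for $\RES^G_L\IND^G_K$ in the Bredon setting, and your verification of bijectivity on basis elements (using that $H\le K$ forces the coset $g_iK$ to be determined by $gH$) is the key point; nothing is missing.
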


Symmond's \cite{symonds-05} methods also yields; for a short account see also the proof of Lemma 3.5 in \cite{kmn}:

\begin{lemma}\label{indlemma}
Let $K \leq H \leq G$ be subgroups.  Then
$$\IND_K^G\Z_\FF \cong \Z[-, G/K]_\FF,$$
and
$$\IND_H^G\Z[-,H/K]_{\FF\cap H} \cong \Z[-,G/K]_\FF.$$
\end{lemma}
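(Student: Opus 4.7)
My plan is to derive both isomorphisms from a single Yoneda-type principle: for any covariant functor $F \colon \mathcal{C} \to \mathcal{D}$ between orbit categories and any object $C$ of $\mathcal{C}$, the induced module satisfies
$$\IND_F \Z[-, C]_{\mathcal{C}} \cong \Z[-, F(C)]_{\mathcal{D}}.$$
To verify this I would use the adjunction $(\IND_F, \RES_F)$ stated in the preceding paragraph of the paper, combined with the Yoneda lemma twice. For any $\mathcal{D}$-module $N$ there is a chain of natural isomorphisms
$$\Hom(\IND_F \Z[-, C]_{\mathcal{C}}, N) \cong \Hom(\Z[-, C]_{\mathcal{C}}, \RES_F N) \cong (\RES_F N)(C) = N(F(C)) \cong \Hom(\Z[-, F(C)]_{\mathcal{D}}, N),$$
natural in $N$, so Yoneda again yields the stated isomorphism of representables.

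Applying this principle with $F = \iota^G_H$ and $C = H/K$, which lies in $\OC{H}{\FF \cap H}$ since $K \in \FF$, immediately yields the second isomorphism: by definition $\iota^G_H(H/K) = G/K$, so
$$\IND_H^G \Z[-, H/K]_{\FF \cap H} \cong \Z[-, G/K]_{\FF}.$$
For the first isomorphism I would reduce to this setup by identifying the trivial Bredon module $\Z_{\FF}$ on $\OC{K}{\FF \cap K}$ with the representable $\Z[-, K/K]_{\FF \cap K}$. Since $K \in \FF$, the object $K/K$ lives in $\OC{K}{\FF \cap K}$; and for every $L \in \FF \cap K$ there is exactly one $K$-map $K/L \to K/K$, so the representable $\Z[-, K/K]_{\FF \cap K}$ evaluates to $\Z$ on every object and acts as the identity on every morphism, i.e. it is $\Z_{\FF}$. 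Taking $H = K$ and $C = K/K$ in the general principle then gives $\IND_K^G \Z_{\FF} \cong \Z[-, G/K]_{\FF}$.

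I do not anticipate a serious obstacle; the only care required is bookkeeping which orbit category each Bredon module lives in, and confirming the tautological identities $\iota^G_K(K/K) = G/K$ and $\iota^G_H(H/K) = G/K$. The reason I prefer the adjunction argument over unpacking the tensor product formula that defines $\IND_F$ is that it delivers a natural isomorphism in one step and avoids a manual coend computation; alternatively, one could quote Lemma \ref{lem:symonds} and the fact that $\IND_F$ preserves free modules together with a direct check on objects, but the Yoneda route makes the naturality transparent.
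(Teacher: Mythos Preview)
Your Yoneda-and-adjunction argument is correct and is the standard way to establish these identifications; the paper does not actually supply its own proof but only refers to Symonds' methods and to the proof of \cite[Lemma~3.5]{kmn}, so there is little to compare at the level of technique.

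One caveat worth recording: you explicitly assume $K\in\FF$ so that $K/K$ and $H/K$ are genuine objects of the orbit categories $\OC{K}{\FF\cap K}$ and $\OC{H}{\FF\cap H}$, which is precisely what makes the Yoneda step go through. The lemma as stated imposes no such hypothesis, and in the proof of Theorem~\ref{big} both isomorphisms are invoked with $K$ equal to a cell stabiliser $H_{\sigma_k}$ lying only in some $\hff{\gamma}$, not in $\FF$. For such $K$ the module $\Z[-,G/K]_\FF$, read as $G/L\mapsto\Z[(G/K)^L]$, is no longer representable, and your argument does not literally cover this case. The extension is routine---one can compute the defining coend directly, or observe that induction carries the $K$-set $K/K$ to the $G$-set $G\times_K(K/K)=G/K$ regardless of isotropy---but you should be aware that the hypothesis $K\in\FF$ you have inserted is stronger than what the paper ultimately uses.
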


\medskip

The Bredon cohomological dimension $\cd_{\frakF}G$ of a group
$G$ with respect to the family $\frakF$ of subgroups is the projective
dimension $\PD_{\frakF}\Z_{\frakF}$ of the trivial
$\OFG$-module $\Z_\FF$. 
The cellular chain complex of a model for
$E_{\frakF}G$ yields a free resolution of the trivial
$\OFG$-module $\Z_{\frakF}$~\cite{lueckbook}*{pp.~151f.}.  In
particular, this implies that for the Bredon geometric dimension
$\gd_{\frakF}G$, the minimal dimension of a model for $E_{\frakF}G$, we have 
$$\cd_\FF G \leq \gd_\FF G.$$

Furthermore, one always has:

\begin{proposition}
    \cite{lm}*{Theorem~0.1~(i)}
    \label{prop:lm}
    Let $G$ be a group. Then
    \begin{equation*}
        \gd_{\frakF}G\leq\max(3,\cd_{\frakF}G).
    \end{equation*}
\end{proposition}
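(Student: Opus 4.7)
The approach is a Bredon-equivariant analogue of the classical Eilenberg--Ganea construction. Writing $n := \cd_\FF G$, the goal is to realise a projective resolution of $\Z_\FF$ of length $n$ as the augmented cellular chain complex of an admissible $G$-CW-complex of dimension at most $\max(3,n)$ whose isotropy lies in $\FF$ and whose $H$-fixed sets are contractible for every $H\in\FF$. By the equivariant Whitehead theorem any such complex is a model for $\EFG$, so the task reduces to a geometric realisation of the resolution. The bridge between algebra and geometry is Lemma \ref{indlemma}: a disjoint union of equivariant cells of type $G/H_i \times D^k$ contributes in degree $k$ the free Bredon module $\bigoplus_i \Z[-,G/H_i]_\FF$ to the cellular chain complex.

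Concretely, I would first fix a free resolution $F_\bullet \to \Z_\FF$ of length $n$ in which $F_i$ is free for $i < n$ and $F_n$ is at least projective, and then build $X$ skeleton by skeleton: at stage $k$, attach cells of the form $G/H \times D^k$ (with $H\in\FF$) in bijection with a chosen Bredon-generating set for $F_k$, with attaching maps prescribed by the boundary in $F_\bullet$. Applying the ordinary Whitehead theorem to each fixed-point set $X^H$ with $H\in\FF$ would then show inductively that $X^H$ is $k$-connected, hence contractible in the limit.

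The decisive step is the top dimension. If $n \geq 3$, Wall's swindle (choose projective $Q$ with $F_n \oplus Q \cong F$ free) lets one enlarge $F_n$ and $F_{n-1}$ by a common free summand, replacing the original resolution by a genuinely free one of the same length, which is realised directly and produces a model of dimension $n$. If $n \leq 2$, an Eilenberg--Ganea-type obstruction in the fundamental groups of the fixed-point sets can prevent a 2-dimensional realisation, but attaching equivariant 3-cells to kill any remaining $\pi_2(X^H)$'s yields a model of dimension $3$. The main obstacle is this low-dimensional case: one must verify, simultaneously at every orbit type, that the required 3-cells can be attached $G$-equivariantly with isotropy still in $\FF$, which is precisely what forces the exponent $3$ in the bound. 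Combining the two cases gives $\gd_\FF G \leq \max(3,n)$, as required.
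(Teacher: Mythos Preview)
The paper does not supply a proof of this proposition; it is quoted directly from L\"uck--Meintrup \cite{lm}, so there is no ``paper's own proof'' to compare your argument against. That said, your sketch is recognisably the equivariant Eilenberg--Ganea strategy that underlies the cited result, and for $n=\cd_\FF G\ge 3$ your outline is essentially correct: replacing the top projective by a free module via a complementary summand and then realising the resulting free resolution cell by cell works, because once each fixed-point set is simply connected the Hurewicz map lets one lift the algebraic boundaries to genuine attaching maps.

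Your account of the low-dimensional case is where a step is missing. You correctly locate the obstruction in the fundamental groups of the fixed-point sets, but attaching $3$-cells cannot touch $\pi_1$: after building the $2$-skeleton so that each $(X^{(2)})^H$ has trivial reduced homology, its fundamental group may still be a nontrivial perfect group, and no $3$-cell changes that. The actual repair is to first attach further equivariant $2$-cells to kill $\pi_1\bigl((X^{(2)})^H\bigr)$ for every $H\in\FF$; this introduces spurious classes in $H_2$, and \emph{then} one attaches $3$-cells to kill the resulting $\pi_2\cong H_2$ (Hurewicz now applies since the space is simply connected). So both extra $2$-cells and $3$-cells are required, and it is the former that carries the Eilenberg--Ganea obstruction; your sentence ``attaching equivariant $3$-cells to kill any remaining $\pi_2(X^H)$'s'' conflates these two steps. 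With that correction your sketch aligns with the L\"uck--Meintrup argument.
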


\smallskip

Next, suppose  $\mT$ and $\mH$ are  families of subgroups  of a group $G$ where $\mT\subseteq \mH$. In Section 5, we will need to adapt a model for $E_{\mT}G$  to obtain a model for $E_{\mH}G$. For this we will use a general construction of  L\"{u}ck and Weiermann (see \cite[\S 2]{lueckweiermann}). We recall the basics of this construction:

Suppose that there exists an equivalence relation $\sim$ on the set $\mathcal{S}=\mH\smallsetminus \mT$ that satisfies the following properties:
\smallskip
\begin{itemize}
\item  $\forall H,K \in \mathcal{S} : H \subseteq K \Rightarrow H \sim K$;
\medskip
\item $ \forall H,K \in \mathcal{S},\forall x \in G: H \sim K \Leftrightarrow H^x \sim K^x$.
\end{itemize}
\smallskip
An equivalence relation that satisfies these properties is called a \emph{strong equivalence relation}.
Let $[H]$ be an equivalence class represented by $H \in \mathcal{S}$ and denote the set of equivalence classes by $[\mathcal{S}]$. The group $G$ acts on $[\mathcal{S}]$ via conjugation, and the stabiliser group of an equivalence class $[H]$ is
\begin{equation*}\label{eq gen normaliser} \mathrm{N}_{G}[H]=\{x \in \Gamma \ | \ H^x \sim H \}. \end{equation*}
Note that $\mathrm{N}_{G}[H]$ contains $H$ as a subgroup. Let $\mathcal{I}$ be a complete set of representatives $[H]$ of the orbits of the conjugation action of $G$ on $[\mathcal{S}]$. 
Define for each $[H] \in \mathcal{I}$ the  family
\begin{equation*} \label{eq: special family}{\mT}[H]=\{ K \leq \mathrm{N}_{G}[H] \ | K \in \mathcal{S}, K \sim H\} \cup \Big(\mathrm{N}_{G}[H] \cap \mT\Big) \end{equation*} of subgroups of $\mathrm{N}_{G}[H]$.
\begin{proposition}[{L\"{u}ck-Weiermann, \cite[2.5]{lueckweiermann}}] \label{LW} Let $\mT \subseteq \mH$ be two families of subgroups of a group $G$ such that $S=\mH\smallsetminus \mT$ is equipped with a strong equivalence relation. Denote the set of equivalence classes by $[\mathcal{S}]$ and let $\mathcal{I}$ be a complete set of representatives $[H]$ of the orbits of the conjugation action of $G$ on $[\mathcal{S}]$. If there exists a natural number $d$ such that  $\mathrm{gd}_{\mT\cap \mathrm{N}_{G}[H]}(\mathrm{N}_{G}[H]) \leq d-1$ and
$\mathrm{gd}_{\mT[H]}(\mathrm{N}_{G}[H]) \leq d$ for each $[H] \in \mathcal{I}$,
and such that $\mathrm{gd}_{\mT}(G) \leq d$, then $\mathrm{gd}_{\mH}(G) \leq d$.
\end{proposition}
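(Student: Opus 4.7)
The plan is to construct an explicit $G$-CW-complex $X$ of dimension at most $d$ and verify that it serves as a model for $E_{\mH}G$, following the standard L\"{u}ck-Weiermann pushout construction. The complex $X$ will combine a model for $E_{\mT}G$ with induced-up models over the normalizers $\mathrm{N}_G[H]$, one for each orbit representative $[H] \in \mathcal{I}$, provided by the hypotheses.

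For each $[H] \in \mathcal{I}$, since $\mT \cap \mathrm{N}_G[H] \subseteq \mT[H]$ as families of subgroups of $\mathrm{N}_G[H]$, the universal property of classifying spaces supplies an $\mathrm{N}_G[H]$-equivariant map
$$f_H \: E_{\mT \cap \mathrm{N}_G[H]}(\mathrm{N}_G[H]) \longrightarrow E_{\mT[H]}(\mathrm{N}_G[H]),$$
unique up to equivariant homotopy, which we replace by a cofibration via the equivariant mapping cylinder. Inducing up along $\mathrm{N}_G[H] \leq G$, the source becomes a $G$-CW-complex with isotropy in $\mT$, which therefore maps $G$-equivariantly into $E_{\mT}G$. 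Define $X$ as the $G$-pushout
$$
\xymatrix@C=1em{
{\coprod}_{[H] \in \mathcal{I}} G \times_{\mathrm{N}_G[H]} E_{\mT \cap \mathrm{N}_G[H]}(\mathrm{N}_G[H]) \ar[r] \ar[d] & {\coprod}_{[H] \in \mathcal{I}} G \times_{\mathrm{N}_G[H]} E_{\mT[H]}(\mathrm{N}_G[H]) \ar[d] \\
E_{\mT}G \ar[r] & X.
}
$$
The dimension bound is elementary: the top-left corner has dimension at most $d-1$, so after mapping-cylinder replacement the top arrow becomes a cofibration into a space of dimension $\max(d-1+1,d) = d$, and pushing out along a cofibration with a $d$-dimensional target preserves the bound, giving $\dim X \leq d$.

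It remains to verify that $X$ is a model for $E_{\mH}G$: all cell stabilisers must lie in $\mH$, and $X^K$ must be contractible for each $K \in \mH$. The stabiliser condition is immediate (cells of $E_{\mT}G$ have stabilisers in $\mT \subseteq \mH$; cells of the induced-up $E_{\mT[H]}(\mathrm{N}_G[H])$ have stabilisers in $\mT[H] \subseteq \mH$). For contractibility, use that the $K$-fixed point functor commutes with $G$-pushouts along cofibrations, reducing $X^K$ to a pushout of $K$-fixed subspaces. When $K \in \mT$, the top arrow restricts to a homotopy equivalence on fixed points (both sides decompose as disjoint unions of contractible pieces with matching indexing), so the pushout collapses onto the contractible $(E_{\mT}G)^K$. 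When $K \in \mathcal{S} = \mH \smallsetminus \mT$, the bottom-left and top-left $K$-fixed subspaces are empty, and by the strong equivalence axioms exactly one orbit summand in the top-right has non-empty $K$-fixed points, which is contractible; the pushout is therefore contractible.

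The main obstacle is this last fixed-point analysis when $K \in \mathcal{S}$. It requires careful use of both axioms of the strong equivalence relation: the equivariance axiom to ensure that $K$-fixed points appear in at most one $G$-orbit of induced summands, and the containment axiom ($H \subseteq K \Rightarrow H \sim K$) to guarantee that whenever some conjugate $gKg^{-1}$ has non-empty fixed points in $E_{\mT[H]}(\mathrm{N}_G[H])$ one must have $gKg^{-1} \in \mT[H]$, so that the fixed-point subspace is contractible. Once these combinatorial facts are established, the contractibility of $X^K$ follows from the homotopy invariance of pushouts along cofibrations and the defining properties of classifying spaces.
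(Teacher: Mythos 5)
Your construction is correct and is precisely the pushout argument of L\"uck--Weiermann: the paper offers no proof of its own here, citing \cite[2.5]{lueckweiermann}, and your proposal reproduces that cited proof, including the mapping-cylinder dimension count and the fixed-point analysis (with the containment axiom ensuring $\mT[H]$ is subgroup-closed, so that non-empty $K$-fixed sets force membership in $\mT[H]$, and the conjugation axiom ensuring exactly one coset summand contributes for $K \in \mathcal{S}$). No gaps worth flagging.
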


\section{Complete Bredon cohomology}

Since $\ModOFG$ is an abelian category, we can just follow the approaches of Mislin \cite{mislin94} and Benson-Carlson \cite{BC}. We will, however, include the main steps of the construction. 
We will begin by describing the Satellite construction due to Mislin \cite{mislin94}. The methods used there can be carried over  to the Bredon-setting by applying \cite[XII.7-8.]{McL}.

Let $M$ be an $\OFG$-module and denote by $FM$ the free $\OFG$-module on the underlying $\FF$-set of $M$. Let $\Omega M =ker(FM\epi M),$ and inductively $\Omega^nM=\Omega(\Omega^{n-1}M).$  Let $T$ be an additive functor from $\ModOFG$ to the category of abelian groups. Then the left satellite of $T$ is defined as
$$S^{-1}T(M)=ker(T(\Omega M) \to T(FM)).$$
Furthermore, $S^{-n}T(M)=S^{-1}(S^{-n+1}T(M)),$ and the family $\{S^{-n}\,|\, n \geq 0\}$ forms a connected sequence of  functors where $S^{-n}T(P)=0$ for all projective $\OFG$-modules $P$ and  $n\geq 1.$
Following the approach in \cite{mislin94} further, we call a connected sequence of additive functors $T^*=\{T^n\, |\, n\in \Z\}$ from $\ModOFG$ to the category of abelian groups a $(-\infty,+\infty)$-Bredon-cohomological functor, if for every short exact sequence $M' \mono M \epi M''$ of $\OFG$-modules the associated sequence
$$ \cdots \to T^nM' \to T^nM \to T^n M'' \to T^{n+1}M' \to \cdots$$
is exact. Obviously, Bredon-cohomology $\Ho^*_\FF(G,-)$ is such a functor with the convention that $\Ho^n_\FF(G,-)=0$
whenever $n<0.$

\begin{definition}
 A $(-\infty,+\infty)$-Bredon-cohomological functor $T^*=\{T^n\, |\, n\in \Z\}$ is called $P$-complete if $T^n(P)=0$ for all $n\in \Z$ and every projective $\OFG$-module $P$. 

 A morphism $\varphi^*: U^* \to V^*$ of $(-\infty,+\infty)$-Bredon-cohomological functors is called a $P$-completion, if $V^*$ is $P$-complete and  if every morphism $U^* \to T^*$ into a $P$-complete  $(-\infty,+\infty)$-Bredon-cohomological functor $T^*$ factors uniquely through $\varphi^*: U^* \to V^*$. 
\end{definition}

The following theorem is now the exact analogue to \cite[Theorem 2.2]{mislin94}. 

\begin{theorem} Every $(-\infty,+\infty)$-Bredon-cohomological functor $T^*$ admits a unique $P$-completion $\widehat T^*$ given by   $$\widehat{T}^j(M)= \colim_{k\geq 0} S^{-k} T^{j+k}(M)$$ for any $M\in  \ModOFG$.\qed
\end{theorem}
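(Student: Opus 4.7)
The plan is to mirror Mislin's construction \cite{mislin94} in the Bredon setting, using that $\ModOFG$ is an abelian category with enough projectives and admits a functorial free resolution via the free $\OFG$-module $FM$ on the underlying $\mF$-set of $M$. Throughout, I will lean on \cite[XII.7--8]{McL} to justify the homological algebra, together with the fact that filtered colimits of abelian groups are exact.

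First, I would set up the directed system. For each $M \in \ModOFG$, the short exact sequence $\Omega M \mono FM \epi M$ and the long exact sequence for $T^*$ give a natural connecting map $T^{j+k}(M) \to T^{j+k+1}(\Omega M)$ whose image lies in $\ker(T^{j+k+1}(\Omega M) \to T^{j+k+1}(FM)) = S^{-1}T^{j+k+1}(M)$. Applying $S^{-k}$ (which is an additive functor) produces natural transformations
\[
 S^{-k}T^{j+k}(M) \longto S^{-(k-1)}\bigl(S^{-1}T^{j+k+1}\bigr)(M) = S^{-(k+1)}T^{j+k+1}(M),
\]
so that $\widehat{T}^j(M) := \colim_k S^{-k}T^{j+k}(M)$ is well-defined and functorial in $M$.

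Next I would verify the three defining properties of a $P$-completion. (a) To see that $\widehat{T}^*$ is a $(-\infty,+\infty)$-Bredon-cohomological functor, I would check that for each fixed $k$, the additive functor $S^{-k}T^{j+k}$ extends to an appropriate long exact sequence on any short exact sequence $M' \mono M \epi M''$: this is the standard satellite long exact sequence of \cite[XII.7--8]{McL} applied to the cohomological functor $T^*$. Taking the filtered colimit in $k$ preserves exactness in $\Ab$, so the colimit system $\widehat{T}^*$ is itself a $(-\infty,+\infty)$-Bredon-cohomological functor. (b) For $P$-completeness, the property recalled from \cite{mislin94} gives $S^{-k}T^{j+k}(P) = 0$ for every projective $\OFG$-module $P$ and every $k \geq 1$, so $\widehat{T}^j(P) = 0$ for all $j \in \Z$. (c) There is a natural map $T^* \to \widehat{T}^*$ coming from the $k=0$ term of the colimit, and when $T^*$ is already $P$-complete, each transition $S^{-k}T^{j+k}(M) \to S^{-(k+1)}T^{j+k+1}(M)$ is an isomorphism for $M$ arbitrary, because the vanishing $T^n(FM) = 0$ forces $S^{-1}T^{n+1}(M) \cong T^n(M)$ via the long exact sequence. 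Hence the structural map $T^* \to \widehat{T}^*$ is an isomorphism whenever $T^*$ is $P$-complete.

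For the universal property, given any morphism $\varphi^* \: U^* \to V^*$ with $V^*$ $P$-complete, I would define $\widehat{\varphi}^* \: \widehat{U}^* \to \widehat{V}^* \cong V^*$ by applying the colimit construction functorially and composing with the inverse of the isomorphism $V^* \xrightarrow{\sim} \widehat{V}^*$. Uniqueness follows because any factoring map into a $P$-complete functor is determined on each $S^{-k}U^{j+k}(M)$ by the connecting maps in the long exact sequences, which are natural in $T^*$. The main technical obstacle is step (c)---identifying $V^* \cong \widehat{V}^*$ for $P$-complete $V^*$---since it requires tracking carefully how the satellite construction interacts with the long exact sequence of a cohomological functor in the Bredon category. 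Once this is established, existence and uniqueness of the $P$-completion follow formally, exactly as in \cite[Theorem 2.2]{mislin94}.
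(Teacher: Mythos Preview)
Your proposal is correct and is precisely the approach the paper has in mind: the paper gives no proof at all beyond declaring the result ``the exact analogue to \cite[Theorem 2.2]{mislin94}'' and noting that Mislin's methods carry over via \cite[XII.7--8]{McL}, and you have simply spelled out those details in the Bredon category. One small index slip: when you apply $S^{-k}$ to the natural transformation $T^{j+k}\to S^{-1}T^{j+k+1}$ you obtain $S^{-k}T^{j+k}\to S^{-k}\bigl(S^{-1}T^{j+k+1}\bigr)=S^{-(k+1)}T^{j+k+1}$, not $S^{-(k-1)}(\cdots)$; the target you wrote is correct, only the intermediate expression needs fixing.
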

In particular, we have, for every $\OFG$-module $M$ that
$$\widehat\EXT^j_\FF(M,-)= \colim_{k\geq 0} S^{-k} \EXT^{j+k}_\FF(M,-).$$

We have immediately:

\begin{lemma} Let $M$ and $N$ be  $\OFG$-modules. If either of these has finite projective dimension, then 
$$\widehat\EXT^*_\FF(M,N)=0.$$
\end{lemma}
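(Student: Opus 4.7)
My plan is to split the argument according to which of the two modules has finite projective dimension, and in each case to exploit the explicit formula
$$\widehat\EXT^j_\FF(M,N)=\colim_{k\geq 0} S^{-k}\EXT^{j+k}_\FF(M,N)$$
together with the fact, supplied by the preceding theorem, that $\widehat{\EXT}^*_\FF(M,-)$ is itself a $P$-complete $(-\infty,+\infty)$-Bredon-cohomological functor in the second variable.

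Suppose first that $d=\pd M<\infty$. A projective resolution of $M$ of length $d$ in $\ModOFG$ shows that the covariant functor $\EXT^n_\FF(M,-)$ on $\ModOFG$ is identically zero for every $n>d$. Consequently $S^{-k}\EXT^{j+k}_\FF(M,N)=0$ whenever $k>d-j$, so the directed system computing $\widehat\EXT^j_\FF(M,N)$ has all terms equal to zero from some index onwards. A directed colimit of such a system is zero, which settles this case.

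Suppose instead that $\pd N<\infty$, and induct on $d=\pd N$. The base case $d=0$ (so $N$ is projective) is immediate from the $P$-completeness of $\widehat{\EXT}^*_\FF(M,-)$; if one prefers a hands-on verification, note that $0\to \Omega N\to FN\to N\to 0$ splits, so $\EXT^n_\FF(M,\Omega N)\to \EXT^n_\FF(M,FN)$ is a split monomorphism and every satellite $S^{-k}\EXT^n_\FF(M,N)$ vanishes. For the inductive step, choose a short exact sequence $0\to N'\mono FN\epi N\to 0$ with $FN$ free; then $\pd N'=d-1$, so by the inductive hypothesis $\widehat\EXT^*_\FF(M,N')=0$, while $\widehat\EXT^*_\FF(M,FN)=0$ by the base case. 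The long exact sequence of the cohomological functor $\widehat{\EXT}^*_\FF(M,-)$ associated with this short exact sequence then forces $\widehat\EXT^*_\FF(M,N)=0$, completing the induction.

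No single step strikes me as a genuine obstacle: Case~1 is a purely formal consequence of the colimit becoming eventually trivial, and Case~2 is a standard dimension-shift. The only point requiring care is to remember that the satellites in the displayed formula are taken in the \emph{second} variable, and correspondingly that it is $P$-completeness and long exactness of $\widehat{\EXT}^*_\FF(M,-)$ in the second variable that are being invoked; both properties are built into the preceding theorem, so nothing beyond bookkeeping is needed.
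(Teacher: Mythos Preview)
Your proof is correct. The paper itself does not spell out an argument for this lemma; it simply records it as an immediate consequence of the preceding construction (the $P$-completion theorem and the displayed colimit formula for $\widehat\EXT^j_\FF(M,-)$), so your two-case treatment is precisely the kind of verification the reader is implicitly expected to supply.
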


We can also mimic Benson and Carlson's approach \cite{BC}.  For any two $\OFG$-modules we denote by $[M,N]_\FF$ the quotient of $\Hom_{\FF}(M,N)$ by the subgroup of those homomorphisms factoring through a projective module. Then it follows that there is a homomorphism $[M,N]_\FF \to [\Omega M,\Omega N]_\FF$ and it can be shown analogously to \cite[Theorem 4.4]{mislin94} that
$$\widehat\EXT^n_\FF(M,M)= \colim_{k,k+n\geq 0}[\Omega^{k+n}M,\Omega^kN]_{\FF}.$$

This now allows us to deduce  the following Lemma, which is an analogue to \cite[4.2]{kropholler}. 

\begin{lemma}\label{tateext0lemma}
$\widehat\EXT^0_\FF(M,M)=0$ if and only if $M$ has finite projective dimension. In particular,
$$\widehat\Ho^0_\FF(G,\Z_\FF)=0 \iff \cd_\FF G < \infty.$$
\end{lemma}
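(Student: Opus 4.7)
The plan is to handle the two implications separately, with the easy direction being immediate.

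For the reverse direction, suppose $M$ has finite projective dimension. Then by the preceding lemma (applied with $N=M$) one has $\widehat\EXT^0_\FF(M,M)=0$ for free.

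For the forward direction, I would exploit the Benson--Carlson style description of complete Ext as a colimit of stable Hom groups, which was recorded just before the statement:
\[
\widehat\EXT^0_\FF(M,M) \;=\; \colim_{k\geq 0}\,[\Omega^k M,\,\Omega^k M]_\FF,
\]
with transition maps induced by the syzygy functor $\Omega$. The identity morphism $\id_M$ determines a class $[\id_M]\in[M,M]_\FF$ which maps to an element of the colimit. Since $\Omega^k(\id_M)=\id_{\Omega^k M}$ (we may compute $\Omega$ using a fixed choice of free cover and functoriality), the assumption $\widehat\EXT^0_\FF(M,M)=0$ forces $[\id_{\Omega^k M}]=0$ in $[\Omega^k M,\Omega^k M]_\FF$ for some $k\geq 0$. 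By definition of $[-,-]_\FF$, this means $\id_{\Omega^k M}$ factors through some projective $\OFG$-module $P$. Hence $\Omega^k M$ is a direct summand of $P$, therefore projective, and so $\pd_\FF M \leq k < \infty$.

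The ``In particular'' statement then follows by taking $M=\Z_\FF$, since $\widehat\Ho^0_\FF(G,\Z_\FF)=\widehat\EXT^0_\FF(\Z_\FF,\Z_\FF)$ and $\cd_\FF G = \pd_\FF \Z_\FF$ by definition.

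The only point requiring any care is the bookkeeping in the colimit: namely that the transition map does send $\id_{\Omega^k M}$ to $\id_{\Omega^{k+1}M}$, and that an element of a directed colimit of abelian groups vanishes iff it already vanishes at a finite stage. Both are formal. There is a mild subsidiary point that $\Omega$ depends on the choice of free cover, but the resulting modules are projectively equivalent, so the argument is independent of the choices made.
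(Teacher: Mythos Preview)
Your argument is correct and is precisely the one the paper has in mind: the paper does not spell out a proof but simply records the Benson--Carlson colimit description $\widehat\EXT^0_\FF(M,M)=\colim_{k\geq 0}[\Omega^k M,\Omega^k M]_\FF$ and then says ``this now allows us to deduce'' the lemma, citing the analogous result \cite[4.2]{kropholler}. Your write-up is exactly that deduction, with the easy direction coming from the preceding lemma and the substantive direction obtained by tracking the image of $[\id_M]$ in the colimit.
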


\section{Proof of Theorem A}

\noindent The proof of Theorem A is analogous to the proof of the main result in \cite{kropholler}.  We begin by recording two easy lemmas, which have their analogues in \cite[3.1]{kropholler} and \cite[4.1]{kropholler} respectively.

\begin{lemma}\label{tateexteslemma}
Let
$$0\to M_n\to M_{n-1} \to ...\to M_1\to M_0 \to L\to 0$$ be an exact sequence of $\OFG$-modules and $i$ be an integer such that $\Ho^i_\FF(G,L) \neq 0.$
Then there exists an integer $0\leq j \leq n-1$ such that $\Ho^{j+i}_\FF(G,M_j) \neq 0.$.
\end{lemma}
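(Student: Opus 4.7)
The plan is the standard dimension-shifting argument carried over to the Bredon setting. I would introduce the successive kernels $K_0 = \ker(M_0 \to L)$ and, for $1 \leq j \leq n-1$, $K_j = \ker(M_j \to M_{j-1})$; exactness of the given complex then yields the short exact sequences of $\OFG$-modules
$$0 \to K_0 \to M_0 \to L \to 0 \quad \text{and} \quad 0 \to K_j \to M_j \to K_{j-1} \to 0 \quad (1 \leq j \leq n-1),$$
together with the identification $K_{n-1} = M_n$ coming from the injectivity of $M_n \to M_{n-1}$.

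Next I would feed each short exact sequence into the long exact sequence of Bredon cohomology, which is available because $\ModOFG$ is an abelian category with enough projectives, so that $\Ho^*_\FF(G,-)$ is a $(-\infty,+\infty)$-Bredon-cohomological functor in the sense of Section~3. The first such long exact sequence gives the fragment
$$\Ho^i_\FF(G, M_0) \longrightarrow \Ho^i_\FF(G, L) \longrightarrow \Ho^{i+1}_\FF(G, K_0),$$
so the hypothesis $\Ho^i_\FF(G, L) \neq 0$ forces either $\Ho^i_\FF(G, M_0) \neq 0$ (in which case $j=0$ works) or $\Ho^{i+1}_\FF(G, K_0) \neq 0$. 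In the latter case, the long exact sequence associated to $0 \to K_1 \to M_1 \to K_0 \to 0$ produces the same dichotomy one degree up, and iterating at most $n-1$ further times locates an index $j$ in the stated range with $\Ho^{i+j}_\FF(G, M_j) \neq 0$; the identification $K_{n-1}=M_n$ is what ensures that this push-forward procedure terminates within the allowed bound.

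The argument is purely formal: it uses only exactness of the complex and the long exact sequence in Bredon cohomology, both of which are already established. Consequently there is no genuine obstacle — the proof is a direct Bredon-theoretic translation of Kropholler's \cite[3.1]{kropholler}, and the only point requiring care is the bookkeeping at the last dimension-shift where one invokes $K_{n-1} = M_n$.
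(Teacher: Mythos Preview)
Your approach is exactly the paper's --- the proof there reads in its entirety ``This is an easy dimension shifting argument'' --- and your splicing into short exact sequences followed by iterated use of the long exact cohomology sequence is the intended argument. One small bookkeeping slip (which is really an off-by-one in the lemma's stated range rather than in your method): the terminal identification $K_{n-1}=M_n$ produces $\Ho^{i+n}_\FF(G,M_n)\neq 0$, i.e.\ $j=n$, so the dichotomy actually terminates with $0\le j\le n$ rather than $0\le j\le n-1$; this is harmless in the application (Theorem~\ref{big}), where no upper bound on $j$ is used.
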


\begin{proof} This is an easy dimension shifting argument.
\end{proof}

\begin{lemma}\label{tatects}
Let $G$ be a group such that $\Ho_\FF^k(G,-)$ commutes with direct limits for infinitely many $k$,  then $\widehat\Ho^k_\FF(G,-)$ commutes for all $k \in \Z.$
\end{lemma}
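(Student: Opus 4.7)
The plan is to exploit the formula
\[
\widehat{\Ho}^j_\FF(G,M) = \colim_{k\geq 0} S^{-k}\Ho^{j+k}_\FF(G,M)
\]
together with the elementary fact that a directed colimit indexed by $\N$ is unchanged after restricting to any cofinal (equivalently, infinite) subset. For each fixed $j$, the hypothesis supplies a cofinal set $\mathcal{K}_j \subseteq \N$ of indices $k$ for which $\Ho^{j+k}_\FF(G,-)$ commutes with direct limits. The goal will be to show that for every $k \in \mathcal{K}_j$ the satellite functor $S^{-k}\Ho^{j+k}_\FF(G,-)$ commutes with direct limits; the conclusion then follows by interchanging the outer directed colimit over $\mathcal{K}_j$ with the directed colimit in $M$ (Fubini for directed colimits of abelian groups).

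The key structural input is that the free functor $F$ and the syzygy functor $\Omega$ on $\ModOFG$ both preserve direct limits. The free functor on $\ModOFG$ factors as the forgetful functor to the category of $\FF$-sets (preserving filtered colimits, since colimits in $\ModOFG$ are pointwise in $\Ab$ and the forgetful $\Ab \to \Set$ preserves filtered colimits) followed by the left adjoint sending an $\FF$-set to its free Bredon module (preserving all colimits). For $\Omega M = \ker(FM \epi M)$, commutation follows from exactness of direct limits in $\ModOFG$. Iterating, $\Omega^k$ and $F\Omega^{k-1}$ also commute with direct limits.

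Combining this with the identity
\[
S^{-k}T(M) = \ker\bigl(T(\Omega^k M) \longto T(F\Omega^{k-1}M)\bigr),
\]
obtained by unfolding the iterated definition of $S^{-k}$ and using that $S^{-\ell}T$ vanishes on projectives for $\ell \geq 1$, one sees that for $k \in \mathcal{K}_j$ the functor $S^{-k}\Ho^{j+k}_\FF(G,-)$ is the kernel of a natural transformation between two direct-limit-preserving functors, and hence itself preserves direct limits. Interchanging colimits then yields
\[
\widehat{\Ho}^j_\FF(G,\colim M_\alpha) = \colim_{k\in\mathcal{K}_j}\colim_\alpha S^{-k}\Ho^{j+k}_\FF(G,M_\alpha) = \colim_\alpha \widehat{\Ho}^j_\FF(G,M_\alpha),
\]
as required.

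The main point needing care is the bookkeeping: checking that the canonical choice of $F$ as free on the underlying $\FF$-set makes $\Omega$ genuinely functorial in $M$, so that the satellite formula is natural and the cofinal-subset reduction legitimately applies to the direct system defining $\widehat{\Ho}^j_\FF(G,-)$. No new mathematical input is needed beyond exactness of direct limits in $\Ab$ and the left-adjoint property of the free Bredon functor.
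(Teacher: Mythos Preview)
Your argument is correct and is essentially an unpacking of the paper's one-line proof, which simply invokes that ``direct limits commute with each other.'' You make explicit the two ingredients the paper leaves implicit: first, that the functorial free module $F$ and syzygy $\Omega$ preserve filtered colimits (so that, via the identity $S^{-k}T(M)=\ker\bigl(T(\Omega^kM)\to T(F\Omega^{k-1}M)\bigr)$ and exactness of filtered colimits in $\Ab$, each term $S^{-k}\Ho^{j+k}_\FF(G,-)$ with $j+k$ in the good set commutes with direct limits); second, that one may restrict the defining colimit of $\widehat\Ho^j_\FF$ to the cofinal subset $\mathcal{K}_j$ before interchanging. Both points are routine but worth spelling out, and your treatment of them is accurate.
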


\begin{proof} This follows from the fact that direct limits commute with each other.
\end{proof}

The proof of Theorem A now relies on the fact that one can hierarchically decompose the class $\HFF$ in exactly the same way as Kropholler's decomposition, see \cite{kropholler, DPT}:  
\begin{itemize}
\item $\hff0=\FF;$
\item For an ordinal $\alpha >0$, we let $\hff\alpha$ be the class of groups acting cellularly on a finite dimensional complex $X$ such that each stabiliser subgroup lies in $\hff\beta$ for some $\beta<\alpha$ and such that $X^K$ is contractible for all $K \in \FF.$
\end{itemize}
A group $G$ now lies in $\HFF$ if and only if it lies in some $\hff\alpha$ for some ordinal $\alpha.$ 

\noindent In particular, $\HFF$ is subgroup closed.

\begin{lemma}\label{limlemma} Let $G$ be a group and $G_\lambda$, $\lambda \in \Lambda$ its finitely generated subgroups. Then we have the following isomorphism:
$$\colim_{\lambda \in \Lambda} \Z[-,G/G_\lambda]_{\FF\cap G_\lambda} \cong \Z_\FF.$$   
\end{lemma}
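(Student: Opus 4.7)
The plan is to verify the isomorphism pointwise, using that colimits in $\ModOFG$ are computed objectwise in $\Ab$. Fixing $H\in\FF$, it then suffices to show
\[
\colim_{\lambda\in\Lambda}\,\Z[-,G/G_\lambda]_{\FF\cap G_\lambda}(G/H)\;\cong\;\Z.
\]
Interpreting $\Z[-,G/G_\lambda]_{\FF\cap G_\lambda}$ as the induced module $\IND_{G_\lambda}^{G}\Z_{\FF\cap G_\lambda}$ (cf.\ Lemma~\ref{indlemma}), its value at $G/H$ is the free abelian group on the set of $G$-maps $G/H\to G/G_\lambda$, equivalently on the $H$-fixed-point set $(G/G_\lambda)^{H}=\{gG_\lambda\mid g^{-1}Hg\le G_\lambda\}$. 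The transition map induced by an inclusion $G_\lambda\le G_\mu$ sends the basis element $gG_\lambda$ to $gG_\mu$, and the canonical augmentation onto $\Z_\FF$ sends every basis element to $1$.

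Next I would observe that for the families $\FF$ considered in this paper ($\Ffin$ or $\Fvc$, or more generally any family whose members are finitely generated), each $H\in\FF$ is itself finitely generated and therefore contained in some $G_{\lambda_0}$, hence in every $G_\lambda\supseteq G_{\lambda_0}$. For all such $\lambda$ the element $eG_\lambda$ lies in $(G/G_\lambda)^{H}$, so the augmentation already surjects onto $\Z$ at these stages. Injectivity in the colimit is then a cofinality argument: given two basis elements $g_1 G_{\lambda_1}$ and $g_2 G_{\lambda_2}$, directedness of $\{G_\lambda\}$ (any two finitely generated subgroups sit inside a finitely generated one) together with $G=\bigcup_\lambda G_\lambda$ provides a single $\mu$ with $G_{\lambda_1},G_{\lambda_2}\le G_\mu$ and $g_1,g_2\in G_\mu$, whereupon both elements are sent to $eG_\mu$ in $\Z[G/H,G/G_\mu]$ and thus identified downstream.

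This shows the colimit at $G/H$ is exactly $\Z$, and the identification is manifestly natural in $G/H\in\OFG$ and compatible with the augmentation, yielding the desired isomorphism of Bredon modules. The argument is essentially bookkeeping; the one point requiring attention is the implicit finite-generation of subgroups in $\FF$, which is automatic for the families used in this paper but worth flagging explicitly, since without it the fixed-point sets $(G/G_\lambda)^H$ could fail to be eventually non-empty.
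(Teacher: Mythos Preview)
Your argument is correct and is essentially a careful unpacking of what the paper's one-line proof (``This follows directly from Lemma~\ref{indlemma}'') leaves implicit: the paper just invokes the identification $\Z[-,G/G_\lambda]_\FF\cong\IND_{G_\lambda}^G\Z_{\FF\cap G_\lambda}$ and trusts the reader to compute the colimit, whereas you actually carry out that computation objectwise via the fixed-point sets $(G/G_\lambda)^H$ and a cofinality argument.

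Your explicit flagging of the hypothesis that members of $\FF$ be finitely generated is a genuine improvement over the paper's treatment. As you observe, if $H\in\FF$ is not finitely generated then $(G/G_\lambda)^H=\emptyset$ for every $\lambda$, so the colimit evaluated at $G/H$ is $0$ rather than $\Z$, and the lemma fails. The paper's applications only use $\Ffin$ and $\Fvc$, where this is automatic, but the lemma is stated for an arbitrary family and the paper's proof does not acknowledge the gap. So your proof is not merely the same approach written out more fully---it identifies and isolates a hidden hypothesis.
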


\begin{proof}
This follows directly from Lemma \ref{indlemma}.
\end{proof}

\begin{theorem}\label{big}
Let $G$ be a group in $\LHFF$ and suppose that $\widehat\Ho_\FF^*(G,-)$ commutes with direct limits. Then $\cd_\FF G < \infty.$
\end{theorem}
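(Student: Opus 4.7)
By Lemma \ref{tateext0lemma}, proving $\cd_\FF G<\infty$ is equivalent to showing $\widehat\Ho^0_\FF(G,\Z_\FF)=0$. Lemma \ref{limlemma} realises $\Z_\FF$ as the direct limit, over the finitely generated subgroups $G_\lambda\leq G$, of the modules $\IND_{G_\lambda}^G\Z_{\FF\cap G_\lambda}\cong\Z[-,G/G_\lambda]_\FF$. Applying the hypothesis that $\widehat\Ho^0_\FF(G,-)$ commutes with direct limits therefore reduces the problem to showing $\widehat\Ho^0_\FF(G,\Z[-,G/H]_\FF)=0$ for each finitely generated $H\leq G$. Since $G\in\LHFF$, every such $H$ lies in $\HFF$, so I will aim for the stronger statement that $\widehat\Ho^*_\FF(G,\Z[-,G/H]_\FF)=0$ in all degrees, for every subgroup $H\leq G$ that lies in $\HFF$.

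This stronger statement I would prove by transfinite induction on the least ordinal $\alpha$ with $H\in\hff\alpha$. In the base case $H\in\FF$, the module $\Z[-,G/H]_\FF$ is a free $\OFG$-module, so the lemma preceding Lemma \ref{tateext0lemma} gives the vanishing immediately. For the inductive step, $H$ acts cellularly on a finite-dimensional CW-complex $X$ whose stabilisers $H_\sigma$ all belong to some $\hff\beta$ with $\beta<\alpha$ and whose fixed-point sets $X^F$ are contractible for every $F\in\FF\cap H$. Evaluating the augmented Bredon cellular chain complex of $X$ at $H/F$ recovers the ordinary acyclic cellular complex of $X^F$, so this augmented complex is a finite-length exact resolution of $\Z_{\FF\cap H}$ over $\OC{H}{\FF\cap H}$ by direct sums of modules $\Z[-,H/H_\sigma]_{\FF\cap H}$. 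Applying the exact functor $\IND_H^G$ (Lemma \ref{lem:symonds}) together with Lemma \ref{indlemma} then yields a finite-length exact resolution of $\Z[-,G/H]_\FF$ in $\OFG$-modules whose terms are direct sums of modules $\Z[-,G/H_\sigma]_\FF$.

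By the inductive hypothesis, $\widehat\Ho^*_\FF(G,\Z[-,G/H_\sigma]_\FF)=0$ for every stabiliser $H_\sigma$. The functor $\widehat\Ho^*_\FF(G,-)$ is additive and hence commutes with finite direct sums, and by hypothesis commutes with filtered colimits; writing an arbitrary direct sum as the filtered colimit of its finite subsums shows it commutes with arbitrary direct sums. Consequently Tate cohomology vanishes on each term of the resolution, and iterated dimension shifting through the Tate long exact sequences attached to the short exact sequences extracted from the resolution then produces the vanishing $\widehat\Ho^*_\FF(G,\Z[-,G/H]_\FF)=0$, closing the induction.

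The subtle point I anticipate is that the direct-limit hypothesis is available only for $\widehat\Ho^*_\FF(G,-)$ itself, whereas the induction must pass through stabilisers $H_\sigma$ that need not be finitely generated and need not inherit any such commutation property. The crucial device is that the inductive statement always concerns $\widehat\Ho^*_\FF(G,-)$ evaluated on the induced modules $\Z[-,G/H_\sigma]_\FF$, rather than Tate cohomology of the subgroups $H_\sigma$ themselves; this is precisely what makes the hypothesis on $G$ alone sufficient to drive the entire argument.
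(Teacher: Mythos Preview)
Your proof is correct and follows essentially the same strategy as the paper's: both reduce via Lemma~\ref{limlemma} and the direct-limit hypothesis to analysing $\widehat\Ho^*_\FF(G,\Z[-,G/H]_\FF)$ for $H\in\HFF$, then use the cellular chain complex of the $H$-space, exact induction $\IND_H^G$, and dimension shifting to pass through the hierarchy. The only cosmetic difference is that the paper organises the transfinite argument as a proof by contradiction (a well-ordering descent on a set $\mathcal S$ of ordinals), whereas you argue directly by induction on $\alpha$.
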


\begin{proof} 
We prove this by contradiction and suppose that $\cd_\FF G = \infty.$ Hence, by Lemma \ref{tateext0lemma}, we have that $\widehat\Ho^0_\FF(G, \Z_\FF) \neq 0.$ We claim that then there exists a group $H \in \FF$ and an integer $i \geq 0$ such that $\widehat\Ho^i_\FF(G, \IND_H^G\Z_{\FF \cap H}) \neq 0.$ By Lemma \ref{indlemma}, we have $\IND_H^G\Z_{\FF \cap H} \cong \Z[-,G/H]$, which is projective, giving us the desired contradiction. 

It now remains to prove the claim:  Let $\mathcal S$ be the set of ordinals $\beta$ such there exists a $i \geq 0$ and $H \leq G$ lying in $\hff\beta$ and such that $\Ho^i_\FF(G, \IND_H^G\Z_{\FF\cap H} )\neq 0.$ If we can prove that $0\in \mathcal{S}$, we are done.
 
 \noindent (1) We show that $\mathcal S$ is not empty:  Let $\{G_\lambda \,|\, \lambda\in \Lambda\}$ be the family of all finitely generated subgroups of $G$. Hence, applying Lemma \ref{limlemma} and the fact that  $\widehat\Ho_\FF^*(G,-)$ commutes with direct limits, we get
 $$\widehat\Ho_\FF^0(G,\Z_\FF)\cong \widehat\Ho_\FF^0(G,\colim_{\lambda \in \Lambda} \Z[-,G/G_\lambda]_{\FF\cap G_\lambda})\cong \colim_{\lambda \in \Lambda}\widehat\Ho_\FF^0(G, \Z[-,G/G_\lambda]_{\FF\cap G_\lambda}).$$
Since $\widehat\Ho_\FF^0(G,\Z_\FF)\neq 0,$ there exists a finitely generated subgroup $G_\lambda$ such that, see also Lemma \ref{indlemma},
 $$\widehat\Ho_\FF^0(G, \Z[-,G/G_\lambda]_{\FF\cap G_\lambda})\cong \widehat\Ho_\FF^0(G, \IND_{G_\lambda}^G\Z_{\FF\cap G_\lambda})\neq 0.$$ Since $G\in \LHFF,$ and $\HFF$ is subgroup closed, $G_\lambda \in \HFF$ and in particular, there is an ordinal $\beta$ such that $G_\lambda\in \hff\beta$. Hence $\beta\in {\mathcal S}.$
 
 \medskip\noindent (2) We now show that, if $0\neq \beta \in {\mathcal S}$, then there is an ordinal $\gamma<\beta$ such that $\gamma \in {\mathcal S}:$  Let $0\neq \beta \in {\mathcal S}.$ Then there is a $H \in G$ and $i \geq 0$ such that $H\in \hff\beta$ and
 $$\widehat\Ho^i_\FF(G, \IND_H^G\Z_{\FF\cap H}) \neq 0. $$
 Hence $H$ acts cellularly on a finite dimensional contractible space $X$ such that each isotropy group lies in some $\hff\gamma$ for  $\gamma < \beta$ and such that $X^K$ is contractible if $K \in \FF.$  Hence we have an exact sequence of free $\of{H}$-modules:
 $$0 \to C_n(X^{(-)}) \to C_{n-1}(X^{(-)}) \to ...\ C_1(X^{(-)}) \to C_0(X^{(-)} )\to \Z_{\FF\cap H} \to 0.$$
 Each 
 $$C_k(X^{(-)})\cong \Z[-, \bigoplus_{\sigma_k\in \Delta_k}H/H_{\sigma_k}],$$
 where $\Delta_k$ is the set of orbit representatives for the $k$-cells of $X$.
 Furthermore, by Lemma \ref{indlemma}, upon induction, we obtain an exact sequence of $\OFG$-modules as follows:
 $$0 \to \bigoplus_{\sigma_n\in \Delta_n}\IND_{H_{\sigma_n}}^G\Z_{\FF\cap H_{\sigma_n}} \to ...\to \bigoplus_{\sigma_1\in \Delta_1}\IND_{H_{\sigma_1}}^G\Z_{\FF\cap H_{\sigma_1}} \to \bigoplus_{\sigma_0\in \Delta_0}\IND_{H_{\sigma_0}}^G\Z_{\FF\cap H_{\sigma_0}} \to \IND_H^G\Z_{\FF\cap H} \to 0.$$
 
 Now, by Lemma \ref{tateexteslemma}, there is a $k \geq 0$ such that 
 $$\widehat\Ho^{j+k}_{\FF}(G, \bigoplus_{\sigma_k\in \Delta_k}\IND_{H_{\sigma_k}}^G\Z_{\FF\cap H_{\sigma_k}}) \neq 0.$$
 Since $\widehat\Ho^{j+k}_{\FF}(G,-)$ commutes, in particular,  with direct sums, there is a $\sigma_k \in \Delta_k$ such that
 $$\widehat\Ho^{j+k}_{\FF}(G,\IND_{H_{\sigma_k}}^G\Z_{\FF\cap H_{\sigma_k}}) \neq 0,$$
 thus proving the claim.

\end{proof}

\begin{corollary}
Let $G$ be a group in $\HFF$ and suppose that $\widehat\Ho_\FF^*(G,-)$ commutes with direct sums. Then $\cd_\FF G < \infty.$
\end{corollary}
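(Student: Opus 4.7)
The plan is to observe that the corollary is simply the $\HFF$\=/special case of Theorem \ref{big} which requires only the weaker hypothesis of commutation with direct sums. In the proof of Theorem \ref{big}, step (1) (reducing to finitely generated subgroups via $\Z_\FF \cong \colim_{\lambda} \Z[-,G/G_\lambda]_{\FF\cap G_\lambda}$) is the only place where commutation with arbitrary direct limits is invoked, and this step is needed purely to pass from $\LHFF$ into $\HFF$. Since the group $G$ is now assumed to lie in $\HFF$ from the start, this reduction is unnecessary, and the rest of the argument goes through with direct sums alone.

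More concretely, I would argue by contradiction. Assume $\cd_\FF G = \infty$. By Lemma \ref{tateext0lemma} this gives $\widehat\Ho^0_\FF(G,\Z_\FF)\neq 0$. Define, as in the proof of Theorem \ref{big}, the set $\mathcal{S}$ of ordinals $\beta$ such that there exists $i \geq 0$ and a subgroup $H \leq G$ with $H\in \hff\beta$ and $\widehat\Ho^i_\FF(G,\IND_H^G\Z_{\FF\cap H})\neq 0$. The key observation replacing step (1) is that $G \in \HFF$ means $G\in \hff\beta$ for some ordinal $\beta$; taking $H=G$ and $i=0$, and noting that $\IND_G^G\Z_\FF \cong \Z_\FF$, we see directly that $\beta \in \mathcal{S}$, so $\mathcal{S}$ is non-empty.

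Now I would repeat step (2) of the proof of Theorem \ref{big} verbatim. Given $0\neq \beta \in \mathcal{S}$ witnessed by some $H\in \hff\beta$, the action of $H$ on the associated finite dimensional contractible complex $X$ yields, after inducing up to $G$ via Lemmas \ref{lem:symonds} and \ref{indlemma}, a finite exact sequence of $\OFG$-modules whose terms are direct sums $\bigoplus_{\sigma_k\in \Delta_k}\IND_{H_{\sigma_k}}^G\Z_{\FF\cap H_{\sigma_k}}$ with each $H_{\sigma_k}$ in some $\hff{\gamma_k}$, $\gamma_k < \beta$. Applying Lemma \ref{tateexteslemma} locates a nonzero $\widehat\Ho$-group on one of these direct sums, and the direct sum commutation hypothesis then isolates a single summand, producing an ordinal $\gamma<\beta$ in $\mathcal{S}$. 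Since the ordinals are well-ordered, iterating eventually forces some $0 \in \mathcal{S}$, but then $H\in \hff0 = \FF$ so that $\IND_H^G\Z_{\FF\cap H}\cong \Z[-,G/H]_\FF$ is projective, and hence by Lemma \ref{tateext0lemma} (and its proof) $\widehat\Ho^i_\FF(G,\IND_H^G\Z_{\FF\cap H}) = 0$ for all $i$, a contradiction.

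There is no real obstacle here; the whole point of stating the corollary is to record what falls out of the proof of Theorem \ref{big} when one drops the localisation and correspondingly weakens the commutation hypothesis. If anything, the only subtlety is to make explicit that the direct sums appearing in step (2) range over the possibly infinite sets $\Delta_k$ of orbit representatives of cells, so that the hypothesis on direct sums (rather than merely finite direct sums) is genuinely used.
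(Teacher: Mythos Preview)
Your proposal is correct and follows exactly the route the paper takes: the paper's proof of the corollary simply says to replace step~(1) of Theorem~\ref{big} by the observation that $G\in\hff\beta$ for some $\beta$ (which is precisely your choice $H=G$, $i=0$) and then to run step~(2) unchanged. Your write-up just spells out in full what the paper leaves as ``follow step~(2) as above''.
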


\begin{proof}
The proof is analogous to the proof of Theorem \ref{big}. To show that $\mathcal S$ is not empty, we can use the fact that $G \in \hff\beta$ for some $\beta.$ Then follow step (2) as above.
\end{proof}

\medskip\noindent Theorem A now follows directly from Theorem \ref{big}, as, for groups of type Bredon-$\fpinfty$ it follows that $\widehat\Ho^*_\FF(G,-)$ commutes with direct limits, see Lemma \ref{tatects} and \cite[Theorem 5.3]{martineznucinkis11}.

\section{Some properties of $\LHFF$}

We consider containment and closure properties of the class ${{\scriptstyle\mathrm{L}\bf H}}^\mathfrak{F}\mX$ especially when $\mF$ either the class of finite groups or the class of virtually cyclic groups.\\

Let $A$ be an abelian group and $\Z=\langle t\rangle$. Consider the semi-direct product $G= A\rtimes \Z$ with $t$ acting on $A$ by conjugation. To shorten the notation, wherever necessary, we will identify $A$ with its image in $G$.  Fix an arbitrary integer $k> 0$. For each integer $i\geq 0$, we define the subgroups  $P^k_i$ of $A$ inductively as follows:
\begin{itemize}
\item $P^k_0=\langle 1\rangle,$
\smallskip
\item $P^k_{i+1}=\{ x\in A \; |   \; t^k(x)x^{-1}\in P^k_i \}$ for $i\geq 0$.
\end{itemize}
An easy induction on $i$ shows that each $P^k_i$ is a normal subgroup of $G$. We set  $P^k=\cup_{i\geq 0}P^k_i$.  Note that $P^k$ is also a normal subgroup of $G$ and it has the property that if $t^k(x)x^{-1}\in P^k$ and $x\in A$  then $x\in P^k$. In fact, $P^k$ can be defined as the smallest subgroup of $G$ with this property.

\begin{lemma}\label{nesting} Let $a\in A$. For each $i\geq 0$, consider the subgroup $G^k_i=\langle P^k_i, (a,t^k)\rangle$ of $G$. Then $P^k_i= G^k_i\cap A$ and $G^k_i$ is  nilpotent of nilpotency class at most $i+1$. \end{lemma}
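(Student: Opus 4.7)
The plan is to work additively on the abelian group $A$ and write $\phi$ for the automorphism of $A$ induced by $t^k$. Then the recursion $t^k(x)x^{-1}\in P^k_i$ reads $(\phi-1)(x)\in P^k_i$, and a straightforward induction on $i$ identifies $P^k_i=\ker((\phi-1)^i)$. In particular each $P^k_i$ is a subgroup of $A$ and, since $\phi$ commutes with $(\phi-1)^i$, it is $\phi$-invariant.

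Set $\tau=(a,t^k)\in G$. A one-line computation in the semi-direct product shows that conjugation by $\tau$ sends $(y,1)\mapsto(\phi(y),1)$, so $\tau$ normalises $P^k_i$ and $G^k_i=P^k_i\cdot\langle\tau\rangle$, with every element of the form $p\tau^n$ for some $p\in P^k_i$ and $n\in\Z$. The projection $G\to G/A\cong\Z$ sends $p\tau^n$ to $t^{nk}$, which is trivial precisely when $n=0$; hence $G^k_i\cap A=P^k_i$, proving the first half of the lemma.

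For the nilpotency bound I would compute the lower central series using the identity $[\tau,x]=(\phi-1)(x)$ for $x\in P^k_i$. Since $P^k_i$ is abelian and $\phi^n-1$ factors as $(\phi-1)$ times a polynomial in $\phi$ for every $n\in\Z$, the derived subgroup $[G^k_i,G^k_i]$ sits inside $(\phi-1)(P^k_i)\subseteq P^k_{i-1}$. Iterating gives $\gamma_{j+1}(G^k_i)\subseteq(\phi-1)^j(P^k_i)\subseteq P^k_{i-j}$ for $1\leq j\leq i$, so $\gamma_{i+1}(G^k_i)\subseteq P^k_0=\{1\}$ as soon as $i\geq 1$. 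The edge case $i=0$ is immediate, as $G^k_0=\langle\tau\rangle$ is infinite cyclic of class $1$. In all cases the class is at most $i+1$, as required.

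No step presents a real obstacle; the key conceptual move is recasting the inductively defined $P^k_i$ as $\ker(\phi-1)^i$, which simultaneously explains that $P^k_i$ is a $\phi$-stable subgroup of $A$, forces $G^k_i\cap A=P^k_i$ via the obvious semi-direct product projection, and makes the lower central series of $G^k_i$ descend through the chain $P^k_i\supseteq P^k_{i-1}\supseteq\cdots\supseteq P^k_0$ at the expected rate. The only bookkeeping to watch is that at each stage of the lower central series the commutators of pairs from $P^k_i$ drop out by abelianness, so that only the action of $\tau$ contributes to the next term.
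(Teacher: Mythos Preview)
Your proof is correct and follows essentially the same mechanism as the paper's: both reduce to the computation $[(a',t^{kn}),y]=t^{kn}(y)y^{-1}$ for $y\in A$, and both push iterated commutators down the chain $P^k_i\supseteq P^k_{i-1}\supseteq\cdots$. Your reformulation $P^k_i=\ker(\phi-1)^i$ makes the descent transparent and in fact yields the sharper bound $\gamma_{i+1}(G^k_i)=1$ (class at most $i$ for $i\geq 1$), whereas the paper only uses $\gamma_2\subseteq P^k_i$ and obtains $\gamma_{i+2}=1$; but the core idea is the same.
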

\begin{proof} For the first part one only needs to check that $G^k_i\cap A$ is in $P^k_i$ as the reverse inclusion is trivially satisfied. But this follows immediately from the fact that $G^k_i \cong P_i^k \rtimes \langle (a,t^k)\rangle.$ 

For the second claim, note that $[G^k_i, G^k_i]$ lies in $A$. Let $0\leq m\leq i$. The only possibly nontrivial $m$-fold commutators starting with an element $x\in P_i^k$ are of the form 
$$y_m=[(a_1, t^{kn_1}), [ (a_2, t^{kn_2}), \dots [(a_m, t^{kn_m}), x]\dots]]$$ for $a_1, \dots, a_m \in P^k_i$ where we denote $y_0=x$. We claim that $y_m$ is in $P^k_{i-m}$. Assuming the claim, we have that $y_i$ is trivial and hence $G^k_i$ is nilpotent of nilpotency class at most $i+1$.

To prove the claim we use induction on $m$. The case $m=0$ is trivially satisfied. Now, suppose $m>0$. Then, by induction,  the $(m-1)$-fold commutator $$z=[ (a_2, t^{kn_2}), \dots [(a_m, t^{kn_m}), x]\dots]\in P^k_{i-m+1}.$$ But then
$$y_m=[(a_1, t^{kn_1}), z]=t^{kn_1}(z)z^{-1}\in P^k_{i-m}$$ because $z\in P^k_{i-m+1}$. This finishes the claim.
\end{proof}

\begin{lemma}\label{converse}  For a given integer $i> 0$, let $N$ be a nilpotent subgroup of $G$ of nilpotency class $i$, which is not contained in $A$. Then  $N=\langle B, (a,t^k)\rangle$ where $B=P^k_i\cap N$ for some $a\in A$ and $k>0$.  In particular, $N$ is contained in  $G^k_i=\langle P^k_i, (a,t^k)\rangle$.\end{lemma}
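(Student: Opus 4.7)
The plan is to first extract the structural decomposition of $N$ from projection onto the $\Z$-factor, and then use the nilpotency hypothesis to locate $N\cap A$ inside $P^k_i$. Since $N$ is not contained in $A$, the composition $N\hookrightarrow G\twoheadrightarrow G/A\cong\Z$ has nontrivial image, which must be $k\Z$ for a unique smallest positive integer $k$. Fix any $a\in A$ with $(a,t^k)\in N$. Because $N\cap A$ is the kernel of this restricted projection and $(a,t^k)$ maps to a generator of the image, every element of $N$ can be written as a product of an element of $B:=N\cap A$ with a power of $(a,t^k)$. This gives the generating statement $N=\langle B,(a,t^k)\rangle$ immediately.

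The heart of the argument is showing $B\subseteq P^k_i$. Define $\psi\colon A\to A$ by $\psi(y)=t^k(y)y^{-1}$; an easy induction shows that $P^k_i=\ker\psi^i$. A direct computation in the semi-direct product, which uses crucially that $A$ is abelian so that the element $a$ cancels, gives $[(a,t^k),y]=\psi(y)$ for every $y\in A$. Iterating, the left-normed $i$-fold commutator
\[
\bigl[(a,t^k),\bigl[(a,t^k),\ldots,[(a,t^k),x]\ldots\bigr]\bigr]
\]
evaluates to $\psi^i(x)$ for any $x\in B$. Since $N$ has nilpotency class $i$ (so $\gamma_{i+1}(N)=1$, consistent with the convention used in Lemma~\ref{nesting}), this iterated commutator is trivial, forcing $\psi^i(x)=1$ and hence $x\in P^k_i$.

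The reverse inclusion $B\supseteq N\cap P^k_i$ is automatic from $P^k_i\subseteq A$, so $B=N\cap P^k_i$. The final assertion is then immediate: $N=\langle B,(a,t^k)\rangle\subseteq\langle P^k_i,(a,t^k)\rangle=G^k_i$.

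I do not foresee a genuine obstacle here. The subgroups $P^k_i$ were designed precisely so that membership is detected by iterated commutators with a fixed element lying over $t^k$, and the hypothesis on the nilpotency class of $N$ is exactly what makes the relevant iterated commutator vanish. The only care required is bookkeeping: one must be consistent with the convention that "class $i$" means $\gamma_{i+1}=1$, so that the correct number of nested brackets is forced to be trivial.
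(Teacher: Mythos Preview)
Your proof is correct and follows essentially the same approach as the paper: both arguments write $N=\langle N\cap A,(a,t^k)\rangle$ via the projection to $\Z$, and then show $N\cap A\subseteq P^k_i$ by computing that the $i$-fold iterated commutator of $(a,t^k)$ with $x\in N\cap A$ equals the $i$-fold application of $y\mapsto t^k(y)y^{-1}$ to $x$, which vanishes by nilpotency. Your packaging via the endomorphism $\psi$ with $P^k_i=\ker\psi^i$ is a clean way to phrase what the paper does by a step-by-step downward induction on the commutator length, but the content is identical.
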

\begin{proof} Clearly, $N=\langle B, (a,t^k)\rangle$ where $B=A\cap N$ for some $a\in A$ and $k>0$.  It is left to show that $B\leq P^k_i$. Let $0\leq m\leq i$ and consider $(i-m)$-fold commutator
$$y_{(i-m)}=[(a, t^{k}), [ (a, t^{k}), \dots [(a, t^{k}), x]\dots]]$$ where we denote $y_0=x\in B$. We  will prove by induction that  $y_{(i-m)}\in P^k_m$. Since $N$ has nilpotency class $i$, $y_{i}=1\in P^k_0$. So, assume $m>0$. Consider $z=[(a, t^{k}), y_{(i-m)}]$. By induction, $z\in P^k_{m-1}$. But $z=t^{k}(y_{(i-m)})y_{(i-m)}^{-1}$. So, by the definition of $P^k_m$,  we have $y_{(i-m)}\in P^k_m$.  

Now, taking $m=i$, gives us that each $x\in B$ lies in $P^k_i$.
\end{proof}

\begin{proposition}\label{P} Define $P=\cup_{k> 0}P^k$ in $A$. Then 
\begin{itemize}
\item[(a)] $P$ is a normal subgroup of $G$.
\smallskip
\item[(b)] $P$ is the smallest subgroup of $G$ defined by the property that if $t^k(x)x^{-1}\in P$ for some $k>0$ and $x\in A$, then $x\in P$. 
\smallskip
\item [(c)] Let $N=\langle B, (a, t^l)\rangle$ where $B\leq P$, $a\in A$ and $l\geq 1$. Then $N$ is locally virtually nilpotent.
\smallskip
\item[(d)] Let $N$ be a locally nilpotent subgroup of $G$ not contained in $A$. Then $N\cap A$ is contained in $P$.
\end{itemize} \end{proposition}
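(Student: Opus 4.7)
The plan is to treat the four parts in order, with (a) and (b) both resting on a monotonicity property $P^k\subseteq P^{nk}$ for all $n\geq 1$. To prove this I would induct on $i$: if $y=t^k(x)x^{-1}\in P^k_i$, then using that $A$ is abelian one computes
\[
t^{nk}(x)x^{-1}=y\cdot t^k(y)\cdot t^{2k}(y)\cdots t^{(n-1)k}(y),
\]
and since $P^k_i$ is normal in $G$, each factor on the right lies in $P^k_i$; the inductive hypothesis $P^k_i\subseteq P^{nk}_i$ then places the product in $P^{nk}_i$, so $x\in P^{nk}_{i+1}$. Hence $\{P^k\}_{k\geq 1}$ is directed under divisibility, and $P=\bigcup_k P^k$ is a directed union of normal subgroups, which settles (a). The same formula verifies the closure property of $P$ in (b): if $t^k(x)x^{-1}\in P^l$, pick $m=\mathrm{lcm}(k,l)$; then $t^m(x)x^{-1}\in P^m$ by the product expansion, whence $x\in P^m\subseteq P$. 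For minimality, a routine induction on $i$ shows that any subgroup $Q$ satisfying the stated property contains every $P^k_i$ (base case $P^k_0=\{1\}$, inductive step applying the property of $Q$ with exponent $k$ to each $x\in P^k_{i+1}$), so $P\subseteq Q$.

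For (c), I would first pass to a larger finitely generated subgroup of the form $N_0=\langle b_1,\ldots,b_n,(a,t^l)\rangle$ with $b_j\in B$, and then choose an integer $k$ that is a common multiple of $l$ and of exponents $k_j$ witnessing $b_j\in P^{k_j}$; the monotonicity from part (a) places all $b_j$ in a single $P^k_i$. Since $P^k_i$ is $G$-invariant, conjugation by powers of $(a,t^l)$ keeps the $b_j$ inside $P^k_i$, and this forces $N_0\cap A\subseteq P^k_i$ (as $N_0/(N_0\cap A)$ is cyclic). Writing $(a,t^l)^{k/l}=(a',t^k)$ with $a'\in A$, the subgroup $\langle N_0\cap A,(a',t^k)\rangle$ has index $k/l$ in $N_0$ and is contained in $\langle P^k_i,(a',t^k)\rangle$, which is nilpotent by Lemma \ref{nesting}. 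Hence $N_0$ is virtually nilpotent, and therefore $N$ is locally virtually nilpotent.

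For (d), pick $g=(a,t^l)\in N$ with $l\geq 1$, which exists because $N\not\subseteq A$. For each $x\in N\cap A$, the two-generator subgroup $\langle x,g\rangle\leq N$ is finitely generated and locally nilpotent, hence nilpotent; Lemma \ref{converse} applied to $\langle x,g\rangle$ (which is not contained in $A$) identifies its intersection with $A$ as a subgroup of some $P^l_i$, so $x\in P^l\subseteq P$. The main obstacle I anticipate is the book-keeping of indices in (a) and (c): one must simultaneously control the exponent $k$ and the nilpotency-class index $i$ when consolidating the generators of a finitely generated subgroup into a common $P^k_i$, and verify that enlarging $k$ preserves membership. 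The telescoping product formula, which works only because $A$ is abelian and each $P^k_i$ is $G$-invariant under the action of $\langle t\rangle$, is the key input that makes the monotonicity $P^k\subseteq P^{nk}$ go through and hence drives everything else.
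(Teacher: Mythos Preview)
Your proposal is correct and follows essentially the same route as the paper: the divisibility monotonicity $P^{k}\subseteq P^{nk}$ drives (a) and (b), and parts (c), (d) reduce to Lemmas~\ref{nesting} and~\ref{converse} exactly as in the paper. The only cosmetic differences are that the paper asserts the monotonicity without writing out your telescoping identity, and for minimality in (b) it introduces an auxiliary filtration $P_i=\{x\in A\mid \exists\,k>0,\ t^k(x)x^{-1}\in P_{i-1}\}$ rather than inducting directly on the $P^k_i$; your direct induction is equivalent and arguably cleaner, as is your explicit finite-index subgroup $\langle N_0\cap A,(a',t^k)\rangle$ in (c).
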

\begin{proof} {\sl (a).} Given any integers $k_1, k_2> 0$ such that $k_1$ divides $k_2$, it follows that $P_{k_1}\subseteq P_{k_2}$. This shows that the set $P$ is a subgroup of $A$.  Since each $P_k$ is a normal subgroup of $G$, their union $P$ is also a normal in $G$.\\
{\sl (b).}  Let $P'$ be the smallest subgroup of $G$ defined by the property stated in (b); denote this property by ($\ast$). Note that $P=\cup_{i\geq 0}P_i$ where the subgroups  $P_i$ are defined inductively by:
\begin{itemize}
\item $P_0=\langle 1\rangle,$
\smallskip
\item $P_{i+1}=\{ x\in A \; |  \; \exists k>0,  t^k(x)x^{-1}\in P_i \}$ for $i\geq 0$.
\end{itemize}
An easy induction on $i$ shows that each $P_i$ is a subgroup of $P'$. Hence, $P\leq P'$. But since $P$ has the property ($\ast$) and  $P'$ is the smallest subgroup of $G$ with the property  ($\ast$), we deduce  that $P=P'$.\\ 
{\sl(c).}  Let $H= \langle b_1, \dots, b_s, (a, t^l)\rangle$, for some $b_1, \dots, b_s \in P$, $a\in A$, and $l, s\geq 1$. It suffices to show that $H$ is virtually nilpotent. Since $P=\cup_{i, k>0} P^k_i$, we conclude that  for each $j\in \{1,...,s\}$, we have $b_j\in P^{k_j}_{i_j}$ for some $i_j, k_j>0$. Set $k=\prod_{j=1}^s k_{i_j}$ and $i=\sup\{i_j \;| \; 1\leq j\leq s\}$. It follows that the group $H'= \langle b_1, \dots, b_s, (a, t^l)^k\rangle$ is a finite index subgroup of $H,$ and $H'\leq \langle P^{kl}_i, (a,t^l)^k\rangle$. So, by Lemma \ref{nesting}, $H'$ is nilpotent.\\
{\sl(d).}  This is a direct consequence of Lemma \ref{converse}.
\end{proof}

\begin{theorem}\label{cyclic ext}  Let $G$ be a semi-direct product $A\rtimes \Z$  where $A$ is a countable abelian group. Define $\mH$ to be the family of all  virtually nilpotent subgroups of $G$.
Then there exists a $3$-dimensional model for $E_{\mH}G$.
\end{theorem}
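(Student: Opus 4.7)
The approach is to invoke the L\"uck--Weiermann construction (Proposition \ref{LW}) with subfamily $\mT=\mathcal{A}$, the family of all subgroups of $A$, and with parameter $d=3$. A $1$-dimensional model for $E_{\mathcal{A}}G$ is supplied by $\R$, with $G$ acting through the projection $G\twoheadrightarrow\Z$ by translations and point stabilisers equal to $A$; this is the first L\"uck--Weiermann bound $\gd_{\mathcal{A}}(G)\leq d$. The substantive content is the description of the equivalence classes of $\mathcal{S}=\mH\smallsetminus\mathcal{A}$ and the normalisers $\mathrm{N}_G[H]$.

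The key structural observation is that \emph{every virtually nilpotent $N\leq G$ not contained in $A$ satisfies $N\cap A\leq P$}. Indeed, a finite-index nilpotent subgroup $N_0\leq N$ cannot be contained in $A$ (otherwise $N/(N\cap A)$ would not embed non-trivially in $\Z$), so Lemma \ref{converse} gives $N_0\cap A\leq P^k_i\leq P$ for suitable $k,i$. The finite quotient $(N\cap A)/(N_0\cap A)$ maps to $A/P$, is invariant under conjugation by $(a,t^k)\in N_0$, and hence fixed pointwise by a power $t^{km}$. But Proposition \ref{P}(b) asserts that no non-trivial element of $A/P$ is fixed by any non-zero power of $t$, so the image is trivial and $N\cap A\leq P$. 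In particular, setting $\bar G:=G/P=(A/P)\rtimes\Z$, the image $\bar H:=HP/P$ of every $H\in\mathcal{S}$ is an infinite cyclic subgroup of $\bar G$.

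I declare $H\sim K$ iff $\bar H$ and $\bar K$ are commensurable infinite cyclic subgroups of $\bar G$; this is routinely a strong equivalence relation, and $\mathrm{N}_G[H]$ is the preimage in $G$ of $\mathrm{Comm}_{\bar G}(\bar H)$. Since conjugation in $\bar G$ preserves images in $\Z$, one has $\mathrm{Comm}_{\bar G}(\bar H)=\bigcup_m\mathrm{Cent}_{\bar G}(\bar c^{\,m})$ for a generator $\bar c$ of $\bar H$. The fixed-point-free property of $A/P$ forces $\mathrm{Cent}_{\bar G}(\bar c^{\,m})\cap(A/P)=0$, so each such centraliser injects into $\Z$ and is cyclic; the directed union is therefore a cyclic subgroup of $\bar G$. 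Lifting back, $\mathrm{N}_G[H]=\langle P,(a,t^{d_0})\rangle$ for some $a\in A$ and $d_0\geq 1$, which by Proposition \ref{P}(c) is locally virtually nilpotent. Countability of $A$ ensures countability of $\mathrm{N}_G[H]$.

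The remaining L\"uck--Weiermann bounds follow easily. Restricting the $\R$-action to $\mathrm{N}_G[H]$ yields $\gd_{\mathcal{A}\cap\mathrm{N}_G[H]}(\mathrm{N}_G[H])\leq 1\leq d-1$, with point stabilisers $A\cap\mathrm{N}_G[H]=P$. For the third bound, every non-trivial subgroup of the infinite cyclic $\mathrm{Comm}_{\bar G}(\bar H)$ is commensurable with $\bar H$, so $\mathcal{T}[H]$ coincides with $\mH\cap\mathrm{N}_G[H]$; since $\mathrm{N}_G[H]$ is countable and locally virtually nilpotent, every finitely generated subgroup lies in $\mathcal{T}[H]$. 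Writing $\mathrm{N}_G[H]$ as a countable directed union of finitely generated subgroups and combining Lemmas \ref{indlemma} and \ref{limlemma} exhibits $\Z_{\mathcal{T}[H]}$ as the cokernel of a map between two direct sums of free Bredon modules, so $\cd_{\mathcal{T}[H]}(\mathrm{N}_G[H])\leq 1$; Proposition \ref{prop:lm} upgrades this to $\gd_{\mathcal{T}[H]}(\mathrm{N}_G[H])\leq\max(3,1)=3$. Proposition \ref{LW} with $d=3$ then gives $\gd_\mH(G)\leq 3$. The principal obstacle is the commensurator calculation; this is precisely where the elaborate construction of $P$ in Proposition \ref{P} pays off, because the fixed-point-freeness of $A/P$ is what turns $\mathrm{Comm}_{\bar G}(\bar H)$ into a tractable cyclic group.
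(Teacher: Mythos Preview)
Your proof is correct and follows the same L\"uck--Weiermann strategy as the paper: the same subfamily $\mT$ of subgroups of $A$, the same equivalence relation (commensurability of images in $G/P$), and the same identification of $\mathrm{N}_G[H]$ with $\langle P,(a,t^{d_0})\rangle$ via the fixed-point-freeness of $A/P$ established in Proposition~\ref{P}(b). The one place where you diverge is in bounding $\gd_{\mT[H]}(\mathrm{N}_G[H])$. The paper writes $\mathrm{N}_G[H]$ as a countable ascending union of finitely generated subgroups lying in $\mT[H]\smallsetminus\mT$, takes the associated Bass--Serre tree $T$, and builds the explicit $3$-dimensional model $T\ast\R$. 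You instead observe that $\mT[H]=\mH\cap\mathrm{N}_G[H]$ (because $\overline{\mathrm{N}_G[H]}$ is cyclic), feed the same ascending union into a telescope resolution to obtain $\cd_{\mT[H]}(\mathrm{N}_G[H])\leq 1$, and then invoke Proposition~\ref{prop:lm}. Both arguments exploit the identical directed-union structure; yours extracts the slightly sharper cohomological statement $\cd_{\mT[H]}\leq 1$, while the paper's join is more explicitly geometric and avoids the appeal to Proposition~\ref{prop:lm}.
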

\begin{proof}   Let $\mT$ be the subfamily of  $\mH$ consisting of all countable  subgroups of $A$. We will use the construction of L\"{u}ck and Weiermann that adapts the model for $E_{\mT}G$ to a model for the larger family $\mH$.

First, we need a strong equivalence relation on the set $$\mathcal S=\mH\smallsetminus \mT=\{H\leq G \;|\; H\not\leq A \mbox{ and } H \mbox{ is virtually nilpotent}\}.$$

Let $^{\overline{\;\;\;}}:G\to G/P$ denote the quotient homomorphism. By Proposition \ref{P}, we have that if  $H\in \mH$, then $\overline{H}$ is virtually cyclic.

Now, for $H, S \in \mathcal{S}$, we say that there is a relation $H\thicksim S$ if $|\overline{H}\cap \overline{S}|=\infty$. It is not difficult to show that this indeed defines a  strong equivalence relation on the set $\mathcal{S}$. Our group $G$ acts by conjugation on the set of equivalence classes $[\mathcal{S}]$ and the stabiliser of an equivalence class $[H]$ is $$N_{G}[H]=\{x\in G \;|\; H^x\thicksim H\}.$$Note that $H\thicksim Z$ if $Z=\langle h \rangle$, $h\in H$, $h\notin A$. Hence $N_{G}[H]=N_{G}[Z].$ Clearly, $Z$ is a subgroup of $N_{G}[Z]$ and $N_{G}[Z]=\langle B, Z\rangle$ for some subgroup $B\leq A$.  But for each $b\in B$, we have $Z^b \sim Z$. Writing $h=(a,t^k)$ for some $a\in A$ and $k>0$, this implies that $\overline{b^{-1}(a,t^k)^nb}=\overline{(a,t^k)^n}$  in $G/P$ for some nonzero integer $n$.  A quick computation then shows that  $\overline{t^{kn}(b)}=\overline{b}$ in $G/P$. This means that $t^{kn}(b)b^{-1}\in P$. Then, by Proposition \ref{P}(b),  $b\in P$. Hence, by part (c) of Proposition \ref{P},  we have that every finitely generated subgroup $K$ of $N_{G}[Z]$ that contains $Z$ is virtually nilpotent. Thus $K\in \mathcal{S}$ and $K\sim Z$ and hence it is in the family 
$$\mT[H]=\{ K \leq \mathrm{N}_{G}[H] \ | K \in \mathcal{S}, K \sim H\} \cup \Big(\mathrm{N}_{G}[H] \cap \mT\Big)$$ of subgroups of $\mathrm{N}_{G}[H]$.
It follows that $N_{G}[H]$ is a countable directed union of subgroups  that are in $\mT[H]$ but are not in $\mathrm{N}_{G}[H] \cap \mT$. Denote by $T$ the tree on which  $\mathrm{N}_{G}[H]$ acts with stabilisers as such subgroups. Note that the action of $G$ on $\mathbb R$ via the natural projection of $G$ onto $\Z$ makes $\mathbb R$ into a model for $E_{\mT}G$.  Restricting this action to $\mathrm{N}_{G}[H]$ and considering the induced action on the join $T\ast \R$ gives us a $3$-dimensional model for  $E_{{\mT}[H]}\mathrm{N}_{G}[H]$. Invoking Proposition \ref{LW} entails  a $3$-dimensional model for $E_{\mH}G$, as was required to prove. 
\end{proof}

\begin{remark} Since finitely generated nilpotent groups lie ${{\scriptstyle\bf H}}_1^{\Fvc}\Fvc$, it follows that countable virtually nilpotent groups  are in ${{\scriptstyle\bf H}}_2^{\Fvc}\Fvc$. We obtain that the group $G=A\rtimes \Z \in {{\scriptstyle\bf H}}_3^{\Fvc}\Fvc$.
\end{remark}

\begin{remark} In the statement of Theorem \ref{cyclic ext}, one could enlarge $\mH$ to be the family of all locally virtually nilpotent subgroups of $G$. Then its proof together with Proposition \ref{P}(c)-(d) would imply that $\mathrm{N}_{G}[H]$ is  $\mT[H]$. So,  a point with the trivial action of $\mathrm{N}_{G}[H]$ would then be a model for $E_{{\mT}[H]}\mathrm{N}_{G}[H]$ for each $H\in \mathcal{S}$. Applying Proposition \ref{LW} would give us a $2$-dimensional model for $E_{\mH}G$.
\end{remark}
In the next example, we  illustrate that the family $\mH$ of all virtually nilpotent subgroups of $G$ can contain nilpotent subgroups of $G$ of arbitrarily high nilpotency class. 

\begin{example}  Consider the unrestricted wreath product $W=\Z\wr \Z$. Rewriting this group as a semi-direct product, we have that $W=A\rtimes \Z$ where $A=\prod_{i\in \Z} \Z$ and the standard infinite cyclic subgroup of $W$ is generated by $t$ and acts on $A$ by translations. Define  $G$ to be the subgroup of $W$ given by $G=P\rtimes \Z$. For each $k>0$, note that $P^k_1$ is the subgroup of $A$ of all $k$-periodic sequences of integers and hence $P^k_1\cong \Z^k$. Since $P_1=\cup_{k>0}P^k_1$, it is countable of infinite rank. Similarly,  one can argue that $P_2/P_1$ is countable of infinite rank and hence $P_2$ is also countable. Continuing in this manner, one obtains that  $P_i$ is countable for each $i>0$ and since $P$ is a countable union of these groups it is itself countable. This shows that the group $G$ satisfies the hypothesis of Theorem \ref{cyclic ext}.\\
Now, it is not difficult to see, that for each $i>0$, the subgroup $P^1_i\rtimes \Z$ of $G$ is nilpotent of  nilpotency class $i$.
\end{example}

\begin{theorem}\label{closure for VC} Let $\mF$ be a class of subgroups of finitely generated groups.  Then $\hff{}$ is closed under countable directed unions. If $\mF$ is the class of all virtually cyclic groups, then $\hff{}$ is closed under finite extensions and  under extensions with virtually soluble kernels. In particular, $\LHFF$ contains all locally virtually soluble groups.
\end{theorem}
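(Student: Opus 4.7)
For closure of $\hff{}$ under countable directed unions, I would take $G=\bigcup_n G_n$ with $G_n\subseteq G_{n+1}$ and each $G_n\in\hff{}$, and build the $1$-dimensional Bass--Serre tree $T$ whose vertex set is $\bigsqcup_n G/G_n$ and whose edges join $gG_n$ to $gG_{n+1}$. Then $G$ acts cellularly on $T$ with vertex stabilisers conjugate to the $G_n$, hence in $\hff{}$. The hypothesis on $\mF$ forces each $F\in\mF$ to be finitely generated, so any $F\in\mF\cap G$ sits inside some $G_n$ and fixes the entire tail $G_n\to G_{n+1}\to\cdots$; hence $T^F$ is a non-empty subtree and therefore contractible. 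One more step up the hierarchy places $G$ in $\hff{}$.

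For closure under finite extensions with $\mF=\Fvc$, let $N\lhd G$ be normal of finite index $n$ with $N\in\hff{\alpha}$, and argue by transfinite induction on $\alpha$. The base case $\alpha=0$ is immediate because finite extensions of virtually cyclic groups are virtually cyclic. For the inductive step, choose coset representatives $g_1,\dots,g_n$ for $G/N$, take an $N$-CW complex $X$ witnessing $N\in\hff{\alpha}$, and form the iterated topological join $Y=X_1\ast\cdots\ast X_n$ of $n$ copies of $X$. Define the $G$-action on $Y$ by permuting factors through the natural $G$-action on $G/N$, combined with the twisted $N$-action $m\cdot_i x=(g_i^{-1}mg_i)\cdot x$ on each $X_i$. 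The key calculation is that, for any $F\in\Fvc\cap G$ with $F$-orbits $O_1,\dots,O_k$ on $G/N$ and chosen representatives $i_j\in O_j$, the fixed-point set is $Y^F\cong X^{g_{i_1}^{-1}(F\cap N)g_{i_1}}\ast\cdots\ast X^{g_{i_k}^{-1}(F\cap N)g_{i_k}}$; each factor is contractible since $g_{i_j}^{-1}(F\cap N)g_{i_j}$ is virtually cyclic in $N$, so $Y^F$ is contractible. Point stabilisers of $Y$ under $G$ are finite extensions of subgroups of $N$ lying in $\hff{\beta}$ for some $\beta<\alpha$, and the inductive hypothesis places these in $\hff{}$; therefore $G\in\hff{\alpha+1}$.

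For closure under an extension $1\to N\to G\to Q\to 1$ with $N$ virtually soluble and $Q\in\hff{}$, I would first use finite-extension closure together with a soluble characteristic finite-index subgroup of $N$ to reduce to $N$ soluble, then induct on the derived length of $N$ to further reduce to $N=A$ abelian. For this abelian-kernel case I would do a transfinite induction on the level $\alpha$ with $Q\in\hff{\alpha}$: at $\alpha=0$, $Q$ is virtually cyclic so $G$ contains a finite-index subgroup $G_0$ of the form $A\rtimes\Z$ or $A\rtimes C$ with $C$ finite; Theorem~\ref{cyclic ext} together with the standard fact that countable virtually nilpotent groups lie in $\hff{}$ (via \cite{lueckweiermann} and the countable directed union closure above) places $G_0$ in $\hff{}$, after which the finite-extension step yields $G\in\hff{}$. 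For $\alpha>0$, pull back a $Q$-CW complex $Y$ witnessing $Q\in\hff{\alpha}$ along $\pi\:G\to Q$: stabilisers of $Y$ under $G$ are abelian-by-$\hff{\beta}$ with $\beta<\alpha$, hence in $\hff{}$ by induction, while for $F\in\Fvc\cap G$ we have $Y^F=Y^{\pi(F)}$, which is contractible since $\pi(F)\in\Fvc\cap Q$.

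The statement about locally virtually soluble groups then follows, since every finitely generated virtually soluble $H$ is itself the kernel of the trivial extension $1\to H\to H\to 1\to 1$ with quotient $\{1\}\in\hff{}$, so $H\in\hff{}$ by the third step and every locally virtually soluble group is in $\LHFF$. The main technical obstacle is the finite-extension step: verifying that the iterated-join construction produces a contractible fixed-point set for every $F\in\Fvc\cap G$ — especially those $F$ not conjugate into $N$ — requires the careful orbit-by-orbit analysis sketched above, together with the inductive finite-extension closure at lower hierarchy levels in order to control the point stabilisers.
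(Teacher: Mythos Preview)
Your argument follows the paper's almost exactly: the tree for directed unions, the join construction for finite-index extensions (which the paper simply cites as ``Serre's construction,'' so your version is in fact more detailed here), the reduction to abelian kernel via the derived series, and the transfinite induction on the level $\alpha$ of $Q$ with Theorem~\ref{cyclic ext} supplying the base case all match.

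There is one slip. In reducing from a virtually soluble to a soluble kernel you invoke ``finite-extension closure,'' but what you established in your second step is closure under extensions with finite \emph{quotient}. After replacing $N$ by a characteristic soluble subgroup $N_0$ of finite index, the group $G/N_0$ sits in an extension with finite \emph{kernel} $N/N_0$ and quotient $Q\in\hff{}$; concluding $G/N_0\in\hff{}$ is a separate step, and finite-quotient closure does not give it. The paper handles this case explicitly by the same pull-back transfinite induction you use later for abelian kernels (pull the witnessing $Q$-complex back along $G/N_0\twoheadrightarrow Q$; stabilisers become finite-by-$\hff{\beta}$), with the base case being the elementary fact that a finite-by-virtually-cyclic group is virtually cyclic. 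You should insert this argument before the derived-length induction.
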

\begin{proof}  The proof of the first fact is the same as for the class of finite groups $\mF$ given in Proposition 5.5 in \cite{DPT}. That is,  let $G$ be a countable directed union of groups that are in $\hff{}$. Then $G$ acts on a tree with stabilisers exactly the subgroups that comprise this  union.  It is   now easy to see that the action of $G$ on the tree satisfies the stabiliser and the fixed-point set conditions of 
the definition of $\hff{}$-groups. This shows that $G$ is in $\hff{}$.

For the second part, first note that by the Serre's Construction,  $\hff{}$ is closed under finite extensions (see the proof of \cite[2.3(2)]{lueck}). 

Let $G$ be a countable group that fits into an extension $K\rightarrowtail G \twoheadrightarrow Q$ such that $K$ is virtually soluble and $Q\in \hff{}$. Suppose $K$ is finite. Then  an easy transfinite induction on the ordinal associated to the class containing $Q$ shows $G$ lies in $\hff{}$. In general, since $K$ is virtually soluble, it contains a soluble characteristic subgroup of finite index, which must be normal in $G$. In view of these facts, without loss of generality, we can assume that $K$ is soluble.\\
Next, we proceed by the induction on the derived length of $K$ to prove that $G\in \hff{}$. 
When $K$ is the trivial group, then $G=Q\in \hff{}$.  Suppose  $K$ is nontrivial. Since $[K, K]$ is a characteristic subgroup of $K$, it is a normal subgroup of $G$. So, there are extensions $$[K,K]\rightarrowtail  G\twoheadrightarrow G/{[K,K]} \;\;\; \mbox{ and } \;\;\;
 K/{[K,K]}\rightarrowtail  G/{[K,K]}\twoheadrightarrow Q.$$ We claim that $G/{[K,K]}\in \hff{}$. Then by induction applied to the first extension $G\in \hff{}$. Let us now prove the claim. 

In view of the second extension, it suffices to show that given an extension   $$A\rightarrowtail S \twoheadrightarrow Q$$ where $A$ is abelian and  $Q\in \hff{_\alpha}$, then $S\in \hff{}$. We use  transfinite induction on the ordinal $\alpha$. When $\alpha= 0$, then $S$ is virtually a semi-direct product $A\rtimes \Z$. Hence, by Theorem \ref{cyclic ext}, it is in $\hff{}$. 

Suppose $\alpha > 0$, then there is a finite dimensional  $Q$-CW-complex $X$ such that each stabiliser subgroup lies in $\hff\beta$ for some $\beta<\alpha$ and such that $X^H$ is contractible for all $H \in \FF.$ The group $S$ also acts on $X$ via the projection onto $Q$. Each stabiliser of this action is abelian-by-$\hff\beta$ and hence by transfinite induction is in $\hff{}$. Therefore, $S\in \hff{}$. This finishes the claim and the proof.
\end{proof}

Recall that a subgroup $G$ of $\mathrm{GL}_n(\mathbb C)$ is said to be of {\it integral characteristic} if the coefficients of the characteristic polynomial of  every element of $G$ are algebraic integers. It follows that $G$ has integral characteristic if and only if the characteristic roots of every element of $G$ are algebraic integers (see \cite[\S 2]{AS}).

\begin{theorem}\label{thm: linear is HFF} Let $G$ be a countable subgroup of some $\mathrm{GL}_n(\mathbb C)$ of integral characteristic. Then $G$ lies in ${{\scriptstyle\bf H}}^{\Fvc}\Fvc.$ 
\end{theorem}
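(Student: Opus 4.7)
The plan is to reduce to the finitely generated case using the closure operations of Theorem~\ref{closure for VC}, embed $G$ virtually into an $S$-arithmetic group via the analysis of Alperin and Shalen, and then apply known results on classifying spaces for groups acting on $\mathrm{CAT}(0)$ spaces.

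Because ${{\scriptstyle\bf H}}^{\Fvc}\Fvc$ is closed under countable directed unions, and a countable group is the directed union of its finitely generated subgroups, it suffices to show that $H \in {{\scriptstyle\bf H}}^{\Fvc}\Fvc$ for every finitely generated integral-characteristic $H \leq \mathrm{GL}_n(\mathbb{C})$. For such $H$, the techniques of Alperin and Shalen \cite{AS} produce (after a specialization replacing any transcendental entries by algebraic ones preserving the characteristic polynomials) a number field $K$, a finite set of places $S$ of $K$ containing all archimedean ones, and a finite-index subgroup $H' \leq H$ admitting a discrete embedding into the $S$-arithmetic linear group $\Gamma = \mathrm{GL}_n(\mathcal{O}_{K,S})$. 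The integral-characteristic hypothesis is precisely what is needed to absorb the finitely many denominators appearing in any generating set of $H$ into the finite set of places $S$.

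The ambient $S$-arithmetic group $\Gamma$ acts properly, isometrically, and with discrete orbits on the $\mathrm{CAT}(0)$ space $X$ given by the product of the symmetric space at the archimedean places and the Bruhat-Tits buildings at the non-archimedean places of $S$; the restricted action of $H'$ on $X$ is also proper. By the results of \cite{luck3, DegPet4}, any group acting properly and isometrically on a separable complete $\mathrm{CAT}(0)$ space with discrete orbits admits a finite-dimensional model for its classifying space with virtually cyclic isotropy, so $H' \in \clh_1^{\Fvc}\Fvc \subseteq {{\scriptstyle\bf H}}^{\Fvc}\Fvc$. Closure under finite extensions (Theorem~\ref{closure for VC}) then lifts this to $H$, and closure under countable directed unions passes the conclusion from the finitely generated subgroups to $G$ itself. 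The main obstacle is the Alperin-Shalen step: producing the discrete virtual embedding of a finitely generated integral-characteristic subgroup of $\mathrm{GL}_n(\mathbb{C})$ into a product of local linear groups over a number field. Once this is in hand, the $\mathrm{CAT}(0)$-geometric input closes the argument.
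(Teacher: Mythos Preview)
Your route differs from the paper's, and there is a real gap at the specialization step. You assert that the techniques of \cite{AS} furnish, for a finitely generated integral-characteristic $H\leq\mathrm{GL}_n(\mathbb{C})$, a faithful specialization of $H$ (or of a finite-index subgroup) into some $\mathrm{GL}_n(\mathcal{O}_{K,S})$. But the specialization lemmas in \cite{AS} only allow one to preserve \emph{finitely many} non-vanishing conditions at a time, whereas injectivity of a representation of an infinite group is an infinite conjunction of such conditions; producing a single $\bar{\mathbb{Q}}$-specialization faithful on all of $H$ is not established there and needs a separate argument that you do not supply. The appearance of a ``finite-index subgroup $H'$'' is also unexplained. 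Finally, your account of where integral characteristic enters is off: once $H$ is realized inside $\mathrm{GL}_n(K)$ for a number field $K$, any finite generating set already has entries in $\mathcal{O}_{K,S}$ for a suitable finite $S$, with no hypothesis on eigenvalues; so as written your argument never actually uses integral characteristic.

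The paper avoids specialization altogether. Working over the original finitely generated ring $A\subset\mathbb{C}$, it invokes the structural Theorem~3.3 of \cite{AS} to obtain an epimorphism $G\twoheadrightarrow H_1\times\cdots\times H_r$ with unipotent kernel, where each $H_i\leq\mathrm{GL}_{n_i}(A)$ acts irreducibly and has integral characteristic. Theorem~B of \cite{DKP} (this is where integral characteristic is genuinely used) then gives each $H_i$ a finite-dimensional model for $E_{\Fvc\cap H_i}H_i$, so the product lies in $\clh_1^{\Fvc}\Fvc$ by \cite[5.6]{lueckweiermann}; closure under extensions with virtually soluble kernel (Theorem~\ref{closure for VC}, which in turn rests on Theorem~\ref{cyclic ext}) then handles the unipotent kernel. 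Thus the paper's argument routes through its own Theorem~B, whereas your proposed shortcut would bypass it entirely---but only if the unjustified embedding step could be repaired.
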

\begin{proof}  Since the class ${{\scriptstyle\bf H}}^{\Fvc}\Fvc$ is  closed under countable directed unions, it is enough to prove the claim when $G$ is finitely generated. Note that under a standard embedding of $\mathrm{GL}_n(\mathbb C)$  into $\mathrm{SL}_{n+1}(\mathbb C)$ the image of $G$ is still of integral characteristic. So, we can assume that $G$ is a subgroup of $\mathrm{SL}_{n}(\mathbb C)$  of integral characteristic. Let $A$ be the finitely generated subring of $\mathbb C$ generated by the matrix entries of a finite set of generators of $G$ and their inverses. Then $G$ is a subgroup of  $\mathrm{SL}_{n}(A)$. 

Let $\mathbb F$ denote the quotient field of $A$. Proceeding as in the proof of Theorem 3.3 of \cite{AS}, there is  an epimorphism $\rho: G \to H_1\times \dots \times H_r$ such that the kernel $U$ of $\rho$ is a unipotent subgroup of $G$ and for each $1\leq i\leq r$, $H_i$ is a subgroup of some 
$\mathrm{GL}_{n_i}(A)$ of integral characteristic where the canonical action of $H_i$ on $\mathbb F^{n_i}$ is irreducible and $\sum n_i=n$.  So, by  the proof of Theorem B in \cite{DKP}, each group $H_i$ admits a  finite dimensional model for $E_{\Fvc\cap H_i}H_i$. Applying \cite[5.6]{lueckweiermann}, one immediately sees that the  product  $Q=H_1\times \dots \times H_r$ admits a finite dimensional model for $E_{\Fvc\cap Q}Q$. So, $Q$ is in ${{\scriptstyle\bf H}}_1^{\Fvc}\Fvc$. By Theorem \ref{closure for VC}, it follows that $G$ lies in ${{\scriptstyle\bf H}}^{\Fvc}\Fvc$.
\end{proof}

\begin{corollary}\label{cor: linear has finite dim} Let $\mF$ be either the class of all finite groups or the class of all virtually cyclic groups and let $G$ be a group such that ${\widehat{\mathrm{H}}}^*_{\mF}(G,-)$ commutes with direct limits. If  $G$ is  a  subgroup of some $\mathrm{GL}_n(\mathbb C)$ of integral characteristic or if $G$ is  a  subgroup of some $\mathrm{GL}_n(\mathbb F)$ where $\mathbb F$ is a field of positive characteristic, then $\cd_{\mF}(G)<\infty$.
\end{corollary}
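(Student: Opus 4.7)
The plan is to derive this corollary directly from Theorem~\ref{big}, which guarantees that $\cd_\mF(G) < \infty$ whenever $G \in \LHFF$ and $\widehat{\Ho}^*_\mF(G, -)$ commutes with direct limits. The second hypothesis is precisely what is assumed, so the entire task reduces to verifying that $G$ belongs to $\LHFF$ in each of the stated situations.

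When $\mF = \Fvc$ and $G$ is a subgroup of $\mathrm{GL}_n(\mathbb{C})$ of integral characteristic, I would invoke Theorem~\ref{thm: linear is HFF} locally: every finitely generated subgroup $G_0 \leq G$ is countable and is itself a subgroup of $\mathrm{GL}_n(\mathbb{C})$ of integral characteristic, so Theorem~\ref{thm: linear is HFF} places $G_0$ in $\clh^{\Fvc}\Fvc$. Taking the (directed) union of such $G_0$ over all finitely generated subgroups then shows $G \in \LH^{\Fvc}\Fvc$, and Theorem~\ref{big} gives $\cd_{\Fvc}(G) < \infty$.

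For $\mF = \Ffin$, both the integral characteristic case in $\mathbb{C}$ and the positive characteristic case in $\mathrm{GL}_n(\mathbb{F})$ are already covered by the results of \cite{DPT} and \cite{DegPet4}, which assert that every linear group over a field of arbitrary characteristic lies in $\LH^{\Ffin}\Ffin$. Feeding this membership into Theorem~\ref{big} again yields $\cd_{\Ffin}(G) < \infty$.

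The step I expect to be the main obstacle is uniform bookkeeping across the two choices of $\mF$ and the two ambient fields: in particular, the pairing $\mF = \Fvc$ with $G$ linear in positive characteristic is not directly handled by Theorem~\ref{thm: linear is HFF}, and one would need to either combine the reduction used in its proof (via \cite{DKP}) with the extension-closure properties from Theorem~\ref{closure for VC}, or note that the intended scope of the corollary matches $\mF = \Fvc$ with the $\mathbb{C}$-integral case and $\mF = \Ffin$ with the positive characteristic case. Once the correct $\LHFF$-membership is secured, the rest of the proof is a one-line invocation of Theorem~\ref{big}.
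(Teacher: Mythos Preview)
Your overall strategy coincides with the paper's: verify $G\in\LHFF$ by checking that every finitely generated subgroup lies in $\HFF$, then invoke Theorem~\ref{big}. For $\mF=\Fvc$ with $G\leq\mathrm{GL}_n(\mathbb C)$ of integral characteristic this is exactly the paper's argument via Theorem~\ref{thm: linear is HFF}; for $\mF=\Ffin$ with $G\leq\mathrm{GL}_n(\mathbb C)$ of integral characteristic the paper cites \cite{AS} rather than \cite{DPT}, but your citation is also adequate.

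The gap you flag is real, and your proposed resolution~(2) is not the intended reading: the corollary is stated for all four combinations of $\mF\in\{\Ffin,\Fvc\}$ and the two linearity hypotheses, so restricting scope is not an option. The paper closes the positive-characteristic case uniformly for \emph{both} families by appealing to \cite[Corollary~5]{DegPet4}, which shows that a finitely generated subgroup $H$ of $\mathrm{GL}_n(\mathbb F)$ with $\mathbb F$ of positive characteristic has finite Bredon cohomological dimension for the relevant family. This places $H$ in $\hff1\subseteq\HFF$ directly, without passing through \cite{DKP} or the extension-closure result of Theorem~\ref{closure for VC}. Once you replace your speculative alternatives with this single citation, the proof is complete and matches the paper's.
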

\begin{proof} Suppose $H$ is a finitely generated subgroup of $G$. If $G$ is  a subgroup of $\mathrm{GL}_n(\mathbb C)$ of integral characteristic, then by \cite{AS}  when $\mF$ is the class of finite groups or  or by the previous theorem when $\mF$ is the class of virtually cyclic groups, we know that $H$ in $\HFF$. If $G$ embeds into  $\mathrm{GL}_n(\mathbb F)$ for some field $\mathbb F$ of positive characteristic, then by \cite[Corollary 5]{DegPet4}, $H$ has finite Bredon cohomological dimension and hence it is in $\HFF$. This shows that $G$ is in $\lhff$. The result now follows from Theorem \ref{big}.
\end{proof}

\section{Change of family} 

In this section we discuss the question when the functor $\widehat\Ho^*_\FF(G,-)$ commutes with direct limits.  By the above, it is obvious that groups of finite Bredon cohomological dimension as well as groups of Bredon-type $\FP_\infty$ satisfy this condition.  It would be interesting to see whether there are groups a priori  satisfying neither, that also have continuous $\widehat\Ho^*_\FF(G,-)$. 

Considering Lemma \ref{tatects}, we see that it is enough to require that $\Ho^k_\FF(G,-)$ commutes with direct limits for infinitely many $k$. This, for example holds for groups, for which the trivial Bredon-module $\Z_\FF$ has a Bredon-projective resolution, which is finitely generated from a certain point onwards.

As mentioned in the introduction, the families of greatest interest are the families $\Ffin$ of finite subgroups and $\Fvc$ of virtually finite subgroups.  In light of Juan-Pineda and Leary's conjecture \cite{jpl}, which asserts that no non-virtually cyclic group is of type $\UUFP_\infty$, the question above is of particular interest for the family $\Fvc.$

Let us begin with the following:

\begin{question}\label{ctscomplete-triv-fin}
Does $\widehat\Ho^*(G,-)$ being continuous imply that $  \widehat\Ho^*_{\Ffin}(G,-)$ is continuous?
\end{question}

The converse of this question is obviously not true. Take any group $G$ with $\cd_{\Ffin}  G < \infty$, which is not of type $\FP_\infty$ and which has no bound on the orders of the finite subgroups.  It follows from \cite{kropholler} that groups with $\cd_\Q G <\infty$ and continuous $ \widehat\Ho^*(G,-)$ have a bound on the orders of their finite subgroups.  Locally finite groups and Houghton's groups satisfy this condition. On the other hand \cite[Theorem 2.7]{kropholler13}, any group $G$ in $\lhf$, for which $\widehat\Ho^*(G,-)$ is continuous has finite $cd_{\Ffin} G$, hence $\widehat\Ho^*_{\Ffin}(G,-)$ is continuous.

Also note that there are examples of groups of type $\FP_\infty$, which are not of type Bredon-$\FP_\infty$ for the class of finite subgroups \cite{ln03}. These groups, however, satisfy $\cd_{\Ffin}  G < \infty$, hence have continuous $  \widehat\Ho^*_{\Ffin}(G,-).$

\begin{question}
Is $\widehat\Ho^*_{\Ffin}(G,-)$ being continuous equivalent to $ \widehat\Ho^*_{\Fvc}(G,-)$ being continuous?
\end{question}

Any group of type $\UUFP_\infty$ is of type $\UFP_\infty$ (see \cite{desiconchabritaBLMS}) and any group with $\cd_{\Fvc} G <\infty$ also has $\cd_{\Ffin} G <\infty$ (see \cite{lueckweiermann}). Hence we may ask:

\begin{question}\label{vcyc-cts-qu}
Suppose $\cd_{\Ffin} G < \infty.$  Does this imply that $\widehat\Ho^*_{\Fvc}(G,-)$ is continuous?
\end{question}

If this question has a positive answer, Theorem A would imply that any group in ${{\scriptstyle\mathrm{L}\bf H}}^{\Fvc}\Fvc$ with $\cd_{\Ffin} G< \infty$ satisfies $\cd_{\Fvc} G< \infty.$\\ 

We end with two questions on the family ${{\scriptstyle\mathrm{L}\bf H}}^{\Fvc}\Fvc$.

\begin{question}\label{ext closer}
Is the class  ${{\scriptstyle\mathrm{L}\bf H}}^{\Fvc}\Fvc$ closed under extensions?
\end{question}

This reduces to asking whether an infinite cyclic extension of group in ${{\scriptstyle\mathrm{L}\bf H}}^{\Fvc}\Fvc$ is also in ${{\scriptstyle\mathrm{L}\bf H}}^{\Fvc}\Fvc$.

\begin{question}\label{elem amenable}
Does the class ${{\scriptstyle\mathrm{L}\bf H}}^{\Fvc}\Fvc$ contain all elementary amenable groups?
\end{question}

Note that a positive answer to Question \ref{ext closer} implies a positive answer to this question.

\section*{References}

\begin{biblist}
    
    \bib{AS}{article}{
    author={ Alperin, R.C., and P.B. Shalen}
    title={Linear groups of finite cohomological dimension}
    journal={Invent. Math}
    volume={ \textbf{66}}
    year={ (1982)}
    pages={ 89--98.}
    }

\bib{BC}{article}{
    AUTHOR = {Benson, D. J., and J. F. Carlson},
     TITLE = {Products in negative cohomology},
   JOURNAL = {J. Pure Appl. Algebra},
  FJOURNAL = {Journal of Pure and Applied Algebra},
    VOLUME = {82},
      YEAR = {1992},
    NUMBER = {2},
     PAGES = {107--129},
     }
    
    \bib{bredon-67}{book}{
    author={Bredon, G.~E.},
    title={Equivariant cohomology theories},
    series={Lecture Notes in Mathematics, No. 34},
    publisher={Springer-Verlag},
    address={Berlin},
    date={1967},
    review={\MR{MR0214062 (35 \#4914)}},
    }
    
    \bib{DKP}{unpublished}{
    author={ D.~Degrijse, }
    author={R.~K\"{o}hl,}
    author={N.~Petrosyan},
    title={Classifying spaces with virtually cyclic stabilizers for linear groups}, 
   note={preprint in preparation.}
   }
    
    \bib{DP13}{article} {
    AUTHOR = {Degrijse, Dieter}
    author={Petrosyan, Nansen},
     TITLE = {Commensurators and classifying spaces with virtually cyclic
              stabilizers},
   JOURNAL = {Groups Geom. Dyn.},
  FJOURNAL = {Groups, Geometry, and Dynamics},
    VOLUME = {7},
      YEAR = {2013},
    NUMBER = {3},
     PAGES = {543--555},
    }
    
     \bib{DegPet2}{article}{
   author={ Degrijse, D.}
   author={Petrosyan, N.}
   title={Geometric dimension of groups for the family of virtually cyclic subgroups}
   note={ preprint, arXiv:1204.3482v2}
   year={ (2012)}
   }
   
   \bib{DegPet4}{article}{
   author={ D.~Degrijse}
   author={N.~Petrosyan,}
   title={Bredon cohomological dimensions for groups acting on CAT(0)-spaces}
   note={submitted,
preprint available at math arXiv.}
}

     \bib{DPT}{article}{
    AUTHOR = {Dembegioti, Fotini}
    author={Petrosyan, Nansen}
    author={Talelli, Olympia},
     TITLE = {Intermediaries in {B}redon (co)homology and classifying
              spaces},
   JOURNAL = {Publ. Mat.},
  FJOURNAL = {Publicacions Matem\`atiques},
    VOLUME = {56},
      YEAR = {2012},
    NUMBER = {2},
     PAGES = {393--412},
    }

    \bib{fn}{article}{
    author={Flores, R.~J.},
    author={Nucinkis, B.~E.~A.},
    title={On {B}redon homology of elementary amenable groups},
    date={2005},
    ISSN={0002-9939},
    journal={Proc. Amer. Math. Soc.},
    volume={135},
    number={1},
    pages={5\ndash 11 (electronic)},
    review={\MR{MR2280168}},
    }

    \bib{fluch-11}{article}{
    AUTHOR = {Fluch, Martin},
     TITLE = {Classifying spaces with virtually cyclic stabilisers for
              certain infinite cyclic extensions},
   JOURNAL = {J. Pure Appl. Algebra},
  FJOURNAL = {Journal of Pure and Applied Algebra},
    VOLUME = {215},
      YEAR = {2011},
    NUMBER = {10},
     PAGES = {2423--2430},
         }

   \bib{fln}{article}{
    AUTHOR = {Fluch, Martin G.}
    AUTHOR={Nucinkis, Brita E. A.},
     TITLE = {On the classifying space for the family of virtually cyclic
              subgroups for elementary amenable groups},
   JOURNAL = {Proc. Amer. Math. Soc.},
  FJOURNAL = {Proceedings of the American Mathematical Society},
    VOLUME = {141},
      YEAR = {2013},
    NUMBER = {11},
     PAGES = {3755--3769},
      }

    \bib{jpl}{incollection}{
    author={Juan-Pineda, D.},
    author={Leary, I.~J.},
    title={On classifying spaces for the family of virtually cyclic
    subgroups},
    date={2006},
    booktitle={Recent developments in algebraic topology},
    series={Contemp. Math.},
    volume={407},
    publisher={Amer. Math. Soc.},
    address={Providence, RI},
    pages={135\ndash 145},
    review={\MR{MR2248975 (2007d:19001)}},
    }
    
    \bib{desiconchabritaBLMS}{article}{
    AUTHOR = {D.H. Kochloukova and C. Mart{\'{\i}}nez-P{\'e}rez  and
              B.E.A. Nucinkis.},
     TITLE = {Cohomological finiteness conditions in {B}redon cohomology},
   JOURNAL = {Bull. Lond. Math. Soc.},
 VOLUME = {43},
      YEAR = {2011},
    NUMBER = {1},
     PAGES = {124--136},
     
}

    \bib{kmn}{article}{
    author={Kropholler, P.~H.},
    author={Mart{\'{\i}}nez-P{\'e}rez, C.},
    author={Nucinkis, B.~E.~A.},
    title={Cohomological finiteness conditions for elementary amenable
    groups},
    date={2009},
    ISSN={0075-4102},
    journal={J. Reine Angew. Math.},
    volume={637},
    pages={49\ndash 62},
    review={\MR{MR2599081}},
    }

    \bib{kropholler}{article}{
    author={Kropholler, P.~H.},
    title={On groups of type {$({\rm FP})\sb \infty$}},
    date={1993},
    ISSN={0022-4049},
    journal={J. Pure Appl. Algebra},
    volume={90},
    number={1},
    pages={55\ndash 67},
    review={\MR{MR1246274 (94j:20051b)}},
    }
    
    \bib{kropholler13}{article}{
    author={Kropholler, P.~H.},
    title={On groups with many finitely cohomology functors},
    date={2013},
    journal={preprint},
    }

    \bib{lafontortiz}{article}{
    author={Lafont, J.-F.},
    author={Ortiz, I.~J.},
    title={Relative hyperbolicity, classifying spaces, and lower
algebraic
    {$K$}-theory},
    date={2007},
    ISSN={0040-9383},
    journal={Topology},
    volume={46},
    number={6},
    pages={527\ndash 553},
    review={\MR{MR2363244}},
    }

    \bib{ln03}{article}{
    AUTHOR = {Leary, Ian J.}
    author={Nucinkis, Brita E. A.},
     TITLE = {Some groups of type {$VF$}},
   JOURNAL = {Invent. Math.},
  FJOURNAL = {Inventiones Mathematicae},
    VOLUME = {151},
      YEAR = {2003},
    NUMBER = {1},
     PAGES = {135--165},
     }
      
    \bib{lueckbook}{book}{
    author={L{\"u}ck, W.},
    title={Transformation groups and algebraic {$K$}-theory},
    series={Lecture Notes in Mathematics},
    publisher={Springer-Verlag},
    address={Berlin},
    date={1989},
    volume={1408},
    ISBN={3-540-51846-0},
    note={Mathematica Gottingensis},
    review={\MR{MR1027600 (91g:57036)}},
    }

    \bib{lueck}{article}{
    author={L{\"u}ck, W.},
    title={The type of the classifying space for a family of
subgroups},
    date={2000},
    ISSN={0022-4049},
    journal={J. Pure Appl. Algebra},
    volume={149},
    number={2},
    pages={177\ndash 203},
    review={\MR{MR1757730 (2001i:55018)}},
    }

    \bib{luecksurvey}{incollection}{
    author={L{\"u}ck, W.},
    title={Survey on classifying spaces for families of subgroups},
    date={2005},
    booktitle={Infinite groups: geometric, combinatorial and
dynamical aspects},
    series={Progr. Math.},
    volume={248},
    publisher={Birkh\"auser},
    address={Basel},
    pages={269\ndash 322},
    review={\MR{MR2195456 (2006m:55036)}},
    }

    \bib{luck3}{article}{
    author = {L{\"u}ck, W.},
     TITLE = {On the classifying space of the family of virtually cyclic
              subgroups for {$\rm CAT(0)$}-groups},
   JOURNAL = {M\"unster J. Math.},
   FJOURNAL = {M\"unster Journal of Mathematics},
    VOLUME = {2},
      YEAR = {2009},
     PAGES = {201\ndash 214},
      ISSN = {1867-5778},
REVIEW= { \MR{MR2545612 (2011a:20107)}},
}

    \bib{lm}{incollection}{
    author={L{\"u}ck, W.},
    author={Meintrup, D.},
    title={On the universal space for group actions with compact
isotropy},
    date={2000},
    booktitle={Geometry and topology: Aarhus (1998)},
    series={Contemp. Math.},
    volume={258},
    publisher={Amer. Math. Soc.},
    address={Providence, RI},
    pages={293\ndash 305},
    review={\MR{MR1778113 (2001e:55023)}},
    }
    
    \bib{lueckweiermann}{article}{
    author={L{\"u}ck, W.},
    author={Weiermann, M.},
    title={On the classifying space of the family of virtually cyclic
    subgroups},
    date={2012},
    journal={Pure App. Math. Q.},
    volume={8},
    number={2},
    pages={479\ndash 555},
    url={http://arxiv.org/abs/math/0702646v2},
    }
    
\bib{McL}{book}{
    AUTHOR = {Mac Lane, Saunders},
     TITLE = {Homology},
    SERIES = {Classics in Mathematics},
      NOTE = {Reprint of the 1975 edition},
 PUBLISHER = {Springer-Verlag},
   ADDRESS = {Berlin},
      YEAR = {1995},
   
}
    
    \bib{mac-lane-98}{book}{
    author={Mac~Lane, S.},
    title={Categories for the working mathematician},
    edition={Second},
    series={Graduate Texts in Mathematics},
    publisher={Springer-Verlag},
    address={New York},
    date={1998},
    volume={5},
    ISBN={0-387-98403-8},
    review={\MR{MR1712872 (2001j:18001)}},

    }
    
  \bib{martineznucinkis11}{article}{
    author={ C. Martinez-P\'erez and B.E.A. Nucinkis,}
    title= {Bredon cohomological finiteness conditions for generalisations of Thompson groups}
    journal={Groups, Geometry, Dynamics}
    volume={7}
    pages={931–959}
    year={2013}
    }

   \bib{MS}{article}{
   author={ D. Meintrup and T. Schick, }
   title={A model for the universal space for proper actions of a hyperbolic group}
   journal={ New York J. Math}
   volume={ 8}
   pages={1–7}
   note={ (electronic)}
   year={2002.}
   }

\bib{mislin94}{article}{
    AUTHOR = {Mislin, G.},
     TITLE = {Tate cohomology for arbitrary groups via satellites},
   JOURNAL = {Topology Appl.},
  FJOURNAL = {Topology and its Applications},
    VOLUME = {56},
      YEAR = {1994},
    NUMBER = {3},
     PAGES = {293--300},
         
}

     \bib{symonds-05}{article}{
    author={Symonds, P.},
    title={The {B}redon cohomology of subgroup complexes},
    date={2005},
    ISSN={0022-4049},
    journal={J. Pure Appl. Algebra},
    volume={199},
    number={1--3},
    pages={261\ndash 298},
    review={\MR{MR2134305 (2006e:20093)}},
    }
    
    \bib{V}{article}{
    author={ K. Vogtmann,}
    title={ Automorphisms of free groups and outer space}
    journal={  Geom. Dedicata}
    volume={ 94}  
    year={2002,}
    pages={ 1--31.}
    }

  \end{biblist}
\end{document}